\theoremstyle{plain}
\newtheorem{thm}{Theorem}
\newtheorem{propn}{Proposition}
\newtheorem{lem}{Lemma}
\newtheorem{defn}{Definition}
\theoremstyle{remark}
\newtheorem{rem}{Remark}
\newcommand{\ra}{\to}
\def\beq{\begin{equation}}
\def\ee{\end{equation}}
\def\bea{\begin{eqnarray}}
\def\eea{\end{eqnarray}}
\newcommand{\al}{\alpha}
\newcommand{\Ga}{\Gamma}
\newcommand{\de}{\delta}
\newcommand{\ep}{\epsilon}
\newcommand{\la}{\lambda}
\newcommand{\pa}{\partial}
\newcommand{\CD}{{\mathcal D}}
\newcommand{\CH}{{\mathcal H}}
\newcommand{\CI}{{\mathcal I}}
\newcommand{\CL}{{\mathcal L}}
\newcommand{\CO}{{\mathcal O}}
\newcommand{\CS}{{\mathcal S}}
\newcommand{\SH}{{\mathsf H}}
\newcommand{\SQ}{{\mathsf Q}}  
\newcommand{\SR}{{\mathsf R}}
\renewcommand{\SS}{{\mathsf S}}
\newcommand{\BR}{{\mathbb R}}
\newcommand{\BC}{{\mathbb C}}
\newcommand{\BP}{{\mathbb P}}
\newcommand{\BZ}{{\mathbb Z}}
\newcommand{\rf}[1]{(\ref{#1})}
\begin{document}\thispagestyle{empty}
\title{Analytic Langlands Correspondence from Separation of Variables}
\author{Federico Ambrosino$^{1,\dagger}$, J\"org Teschner$^{1,2,*}$}
\address{
$^{1}$  Deutsches Elektronen-Synchrotron DESY, \\
Notkestrasse 85,
20607 Hamburg,
Germany,\\[2ex]
$^{2}$ Department of Mathematics, 
University of Hamburg, \\
Bundesstrasse 55,
20146 Hamburg, Germany,\\[2ex]
E-mail: ${}^\dagger$ \href{mailto:federicoambrosino25@gmail.com}{federicoambrosino25@gmail.com}, \; $ {}^*$\href{mailto:joerg.teschner@desy.de}{joerg.teschner@desy.de}}
\maketitle

\begin{quote}
\begin{center}{\bf Abstract}\end{center}
{\small
The analytic Langlands correspondence proposed by Etingof, Frenkel and Kazhdan
describes the solution to the spectral problems naturally arising in the quantisation of the Hitchin
integrable systems in terms of real opers, certain second order differential operators on a Riemann
surface having real monodromy. We prove this correspondence  in the cases associated to the
group $\mathrm{PSL}(2,\mathbb{C})$, and Riemann surfaces of genus zero with a number of
punctures larger than three. A crucial ingredient is a unitary integral transformation mapping
products of solutions to the ordinary differential equation associated to a real oper to eigenfunctions
of the quantised Hitchin Hamiltonians. This allows us to construct joint eigenfunctions of Hecke
operators and Hitchin Hamiltonians from real opers.

}
\end{quote}

\setcounter{tocdepth}{2}
\tableofcontents



\section{Introduction}

The analytic Langlands correspondence has been introduced in \cite{EFK1}. It supplements the geometric Langlands correspondence
by analytic aspects, claiming a correspondence between square-integrable eigenfunctions of the Hamiltonians of the quantised Hitchin 
integrable system on a Riemann surface $C$, and real opers, a certain class of flat connections on $C$
having real holonomy. It refines the correspondence 
which had previously been
proposed in \cite{Te} between single-valued eigenfunctions of the Hamiltonians of the Hitchin integrable system, and real opers. 
Such correspondences have interesting relations  outlined in \cite{GW,GT}
to supersymmetric quantum field theories, and to two-dimensional conformal field
theories.

The proposal made in \cite{EFK1} has  two main ingredients helping to gain analytic control. 
The first is to work in the Hilbert space framework offered by the natural scalar product on spaces of half-densities on 
the moduli spaces $\mathrm{Bun}_{G}(C)$ of holomorphic $G$-bundles on $C$. Another crucial ingredient is the use of the Hecke operators,
natural families of integral operators on the spaces of half-densities on $\mathrm{Bun}_{G}(C)$ \cite{EFK2}. As eigenfunctions of the 
Hecke operators turn out to be simultaneously eigenfunctions of the Hitchin Hamiltonians, one may start by studying the spectral problem 
for the Hecke operators. This spectral problem is more easily seen to be well-defined, as the Hecke operators are compact
self-adjoint operators, while the Hamiltonians 
of the quantised Hitchin system are unbounded. 

A proof of the analytic Langlands correspondence so far only appears to be available in a few cases, associated to $G=\mathrm{SL}(2)$, 
and Riemann surfaces $C=C_{g,n}$ of genus $g=0$ and $n=4,5$ punctures \cite{EFK1,EFK}. The goal of this paper is to offer a proof
for arbitrary $n$.

To this aim we shall adopt a strategy known as Separation of Variables ($\mathsf{SoV}$) in the literature on quantum integrable models. It amounts to the 
construction of a unitary integral transformation mapping the product of solutions to an ordinary differential equation (ODE) to a solution of the 
eigenvalue equations of our interest. 
The ODEs appearing in this context are closely related to the real opers appearing on one
side of the analytic Langlands correspondence. The $\mathsf{SoV}$ method is powerful  as it allows one to construct
eigenfunctions of the Hitchin Hamiltonians from real opers. It shows that having a correspondence to a real oper is not 
only necessary, as shown in \cite{EFK,EFK4} for arbitrary $n$, but also sufficient for being an eigenfunction.

To carry this program out in practice, it will be crucial to understand how the Hecke operators interact with the integral transformations 
defined by the $\mathsf{SoV}$ method. This will allow us to prove that the image of a real oper under the $\mathsf{SoV}$-transformation is an eigenfunction 
of the Hecke operators.  It turns out that
the Hecke eigenvalue property follows from the observation that the result of a Hecke modification at a point $z\in C\equiv C_{0,n}$ can be 
represented by a $\mathsf{SoV}$-transformation associated to the Riemann surface $C':=C\setminus\{z\}\simeq C_{0,n+1}$.
Proving this fact turns out to be the main technical issue arising in this context.

The application of the $\mathsf{SoV}$  method circumvents a difficulty that appears in other approaches
to the  analytic Langlands correspondence. It originates from the fact that the Hitchin Hamiltonians are singular
at the locus in $\mathrm{Bun}_{G}(C)$ associated to the ``wobbly'' bundles, bundles admitting nilpotent Higgs fields. Checking that
the resulting singular behaviour of the eigenfunctions at the locus does not spoil square-integrability can be difficult, having to rely on detailed
information on the geometry of the wobbly locus. The $\mathsf{SoV}$ transformation, being unitary, maps the check of square-integrability 
to the simpler problem to check square-integrability of products of solutions to the oper equation with respect to a suitable measure. 

We expect that the strategy used in this paper admits a generalisation to Riemann surfaces $C=C_{g,n}$ of higher genus
$g>0$ using the classical version of the $\mathsf{SoV}$-transformation developed in \cite{DT} as geometric groundwork.

\section{Analytic Langlands correspondence}
We will study the quantised Hitchin systems on 
$C=\mathbb{P}^1\setminus\{z_1,\dots,z_{n}\}$. In order to formulate the main results it will 
be convenient to assume, without essential loss of generality, that $z_n=\infty$.
The relevant geometrical background is
described in \cite{EFK}.

\subsection{Quantisation of the Hitchin system}

We begin by  introducing Hilbert spaces $\CH_{J}$, of homogeneous and translation invariant 
functions. The elements of $\CH_J$ are
represented by functions $\Psi$ of $n-1$ complex variables $\bm{x}=(x_1,\dots,x_{n-1})$
satisfying
\begin{equation}\label{trans-dil-Psi}
\begin{aligned}
&\Psi(x_1+\de,\dots,x_{n-1}+\de)=\Psi(x_1,\dots,x_{n-1}), \\
&\Psi(\rho x_1,\dots,\rho x_{n-1})=|\rho|^{2J}\Psi(x_1,\dots,x_{n-1}).
\end{aligned}
\end{equation}
It is well-known that such functions can be used to represent invariants in tensor 
products of $n$ spherical principal series representations of $\mathrm{PSL}(2,\BC)$ if $j_r\in-\frac{1}{2}+\mathrm{i}\BR$, 
$r=1,\dots,n$.
Translation invariance allows one to represent  the functions $\Psi$ by
the homogenous functions 
$\Psi(0,x_2,\dots,x_{n-1})$. 
Away from $x_{n-1}=0$ one may represent such functions in the form
\begin{equation}\label{Psi-psi}
\Psi(0,x_2,\dots,x_{n-1})=|x_{n-1}|^{2J} \psi(x_2/x_{n-1},\dots,x_{n-2}/x_{n-1}).
\end{equation}
Similar representations exist away from $x_r=0$, $r=2,\dots,n-1$. It follows that 
the functions $\psi$  on 
the right side of \rf{Psi-psi} represent sections of line bundles $\CL_{J}$ of 
densities on the complex projective space $\BP^d$, $d=n-3$,  in a particular chart.

The Hilbert space structure on $L^2(\BC^d)$ allows us to define a 
natural norm on spaces of regular homogenous functions $\Psi$. With the help of  \rf{Psi-psi} we may represent this norm  as
\begin{equation}\label{Psi-norm}
\lVert \Psi\rVert^2=\int_{\BC^d} d^{2d}\bm{\xi}\;|\psi(\bm{\xi})|^2, \qquad \bm{\xi}=(\xi_2,\dots,\xi_{n-2}).
\end{equation}
Representatives for the scaling equivalence class can be
represented in many  other ways than \rf{Psi-psi}.  When $J\in -\frac{n-2}{2}+\mathrm{i}\BR$ it can be checked that $\lVert \Psi\rVert^2$ does not 
depend on the representation, 
explaining why \rf{Psi-norm} defines a natural Hilbert space $\CH_{J}$ of square-integrable sections of $\CL_J$.

Following \cite{EFK}, we shall use $\CH_J$ to define $\CH_{i}$ for $i\in\BZ/2$ as $\CH_i=\CH_{J_i}$, with $J_i$ chosen as
\begin{equation}\label{jidef}
J_i=\sum_{r=1}^{n-1}j_r-j_{n,i}, \qquad j_{n,i}=\bigg\{\begin{aligned} j_n\quad\;\;&\text{for}\;\;i=0,\\
-j_n-1\;\;&\text{for}\;\;i=1.
\end{aligned}
\end{equation}
We may note that the Hilbert spaces $\CH_i$, $i=0,1$, are canonically isomorphic,
with unitary isomorphisms $\mathbf{K}:\CH_i\ra\CH_{i+1}$ being represented in terms of 
the functions $\psi$ representing elements of $\CH_i$ via \rf{Psi-psi} as the identity operator, 
 but different representations in terms  of homogenous functions. As explained in \cite{EFK}, 
 one may associate $\CH_i$ with  the quantised Hitchin systems
 associated to  vector bundles on $C$ of the form $\mathcal{O}\oplus \mathcal{O}(i)$, for 
 $i=0,1$, respectively. 

The quantum integrable structure of the Hitchin system 
is defined by the Hamiltonians.
In order to define the Hamiltonians, we shall first introduce the differential operators
\begin{equation}
\CD_r^{+}=\pa_{x_r},\qquad \CD_{r}^0=x_r\pa_{x_r}-j_r,\qquad\CD_r^-=2j_rx_r-x_r^2\pa_{x_r},
\end{equation} 
allowing us to define
\[
\mathsf{H}_r=\sum_{s\neq r}\frac{\eta_{ab}\CD_r^a\CD_s^b}{z_r-z_s}, \qquad r=1,\dots,n-1,
\]
where $\eta^{00}=1$, $\eta^{+-}=\eta^{-+}=-\frac{1}{2}$, and $\eta^{ab}=0$ otherwise.
One should keep in mind that only $n-3$ out of the $n-1$ operators $\mathsf{H}_r$ are linearly independent. We are here considering 
parameters of the form $j_r=-\frac{1}{2}+\mathrm{i}s_r$ with $s_r\in\BR\setminus\{0\}$, corresponding to the cases 
of principal series representations of $\mathrm{PSL}(2,\BC)$ discussed in \cite{EFK4}, excluding the slightly different 
case $j_r=-\frac{1}{2}$ considered in \cite{EFK}. 

The differential operators $\SH_r$ will be unbounded, as usual. In order to control the functional-analytic aspects
it will be advantageous to introduce the so-called Baxter operators.

\subsection{Baxter operators}

We shall define the operators $\mathbf{Q}_y$ as the composition $\mathbf{Q}_y=\mathbf{K}\cdot \mathbf{H}_y$, with the 
{Hecke operators}
$\mathbf{H}_y:\CH_i\ra\CH_{i+1}$, being defined as 
\begin{equation}\label{Heckedef}
\begin{aligned}
&(\mathbb{H}_{y,x}\Psi)(x_1,\dots,x_{n-1})=\prod_{r=1}^{n-1}\frac{|x_r-x|^{4j_r}}{|z_r-z|^{2j_r}}
\Psi\big(y_1(x_1,x,y),\dots,y_{n-1}(x_{n-1},x,y)\big),\\
&\mathbf{H}_{y}\Psi=\frac{1}{\pi}\int_{\BC}d^2x \;\mathbb{H}_{y,x}\Psi,
\qquad\qquad y_r(x_r,x,y):=-\frac{z_r-y}{x_r-x},\qquad 
\end{aligned}
\end{equation}
We refer to the operators $\mathbf{Q}_y$ as Baxter operators\footnote{This terminology can be motivated by 
observing that the equation \rf{univoper} plays the role of Baxter's T-Q-relations in this context, see also the 
discussion in \cite{EFK4}. We note, however, that the precise use of this terminology  adopted here can differ slightly from other appearances in the literature.} in order to distinguish the operators  $\mathbf{Q}_y:\CH_i\ra\CH_{i}$
from their relatives $\mathbf{H}_y$ mapping $\CH_i$ to $\CH_{i+1}$.
The main properties of the operators $\mathbf{Q}_y$ have been proven in \cite[Section 2]{EFK4}. 
The Baxter operators are compact, hence bounded, for all $y\in\BC\setminus\{z_1,\dots,z_{n-1}\}$.
They are furthermore self-adjoint, and mutually commuting,
\[
\big[\,\mathbf{Q}_{y}\,,\mathbf{Q}_{y'}\,\big]=0,\qquad \forall\; y,y'\in\BC\setminus\{z_1,\dots,z_{n-1}\}.
\] 
The spectral problem 
for the family of Baxter operators is therefore well-posed, and the abstract spectral 
theorem (\cite[Corollary 2.32]{EFK4}) asserts existence of a discrete set $\CS$ such that
\begin{equation}\label{specthm}
\CH_i\simeq \bigoplus_{\chi\in\CS} \CH_{\chi},
\end{equation}
where $\CH_{\chi}$ are the eigenspaces for fixed eigenvalues $\chi$, represented by 
functions $\chi(y,\bar{y})$ on $C$ such that
$
\mathbf{Q}_y\Psi_\chi=\chi(y,\bar{y})\Psi_\chi.
$
The spaces $\CH_\chi$ are finite-dimensional for all $\chi\in\CS$.

\cite[Proposition 3.10]{EFK4} clarifies the relation between Baxter operators and Hamiltonians,
which can be concisely formulated as the operator differential equation
\begin{equation}\label{univoper}
(\pa_y^2+\mathsf{T}(y))\mathbf{Q}_y=0,\qquad
\mathsf{T}(y)=\sum_{r=1}^{n-1}\bigg(\frac{\de_r}{(y-z_r)^2}+\frac{\SH_r}{y-z_r}\bigg),
\end{equation}
where $\de_r=j_r(j_r+1)$.
We refer to \cite{EFK4} for the details. 
This implies that eigenstates of the Baxter operators are simultaneously eigenstates of $\SH_r$, $r=1,\dots,n-1$.

\subsection{Formulation of the analytic Langlands correspondence}

The analytic Langlands correspondence describes the spectrum of Hitchin systems in geometric terms.
It claims that there is a one-to-one correspondence between eigenstates and real opers.

\begin{defn}
An oper on $C_{0,n}=\mathbb{P}^1\setminus\{z_1,\dots,z_n\}$ is a differential 
operator having the form  $\pa_y^2+t(y)$ on $\BC\setminus\{z_1,\dots,z_n\}$, with $t(y)$ of the form
\begin{equation}\label{t-exp-E}
t(y)=\sum_{r=1}^{n}\bigg(\frac{\de_r}{(y-z_r)^2}+\frac{E_r}{y-z_r}\bigg), \qquad E_r\in\BC, \quad r=1,\dots,n,
\end{equation}
having a finite limit $\lim_{y\ra\infty}y^{4}t(y)$. 
 An oper on $C_{0,n}$ is a real oper if it has real monodromy, meaning that it has monodromy group conjugate to $\mathrm{SL}(2,\BR)$.
 
\end{defn}

An important role will be played by the solutions to the system of differential equations
\begin{equation}\label{oper-eqns}
(\pa_y^2+t(y))\chi(y,\bar{y})=0,\qquad
(\pa_{\bar{y}}^2+\bar{t}(\bar{y}))\chi(y,\bar{y})=0.
\end{equation}
An oper admits solutions $\chi$ that are single-valued on $C_{0,n}$ if and only if it is real \cite[Theorem 3.5]{Fi}. 
The solution 
$\chi$ is determined uniquely up to scaling by the oper $\pa_y^2+t(y)$, and vice-versa. 
Here comes a sketch of the 
argument from \cite{Te,Fi}.  
A general solution to \rf{oper-eqns} can locally be represented in the form $\chi(y,\bar{y})=\eta(y)\cdot C\cdot (\eta(y))^{\dagger}$, 
with $\eta(y)$ being the row vector formed out of two linearly independent solutions to the left equation in \rf{oper-eqns}, and $C$ being 
a two-by-two matrix with complex matrix elements. 
By a change of basis one may bring $C$ to diagonal form $C=\mathrm{diag}(1,C_{22})$, with $C_{22}\in\{-1,0,1\}$. Single-valued 
solutions having $C_{22}=1$ or $C_{22}=0$ would define metrics with positive or vanishing constant curvature on $C$, leading 
to contradictions with the Gauss-Bonnet theorem. Single-valuedness in the remaining case $C_{22}=-1$ requires 
that the monodromy is in $\mathrm{SU}(1,1)$, and therefore conjugate to $\mathrm{SL}(2,\BR)$.

The analytic Langlands correspondence is the content of the following theorem. 
\begin{thm}\label{thm-RealLang}
The eigenspaces $\CH_{\chi}$ are one-dimensional, and the 
set $\CS$ is in bijection to the set $\mathrm{Op}_{\BR}^{}(C)$ of all real opers on $C$.
\end{thm}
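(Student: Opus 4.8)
The plan is to prove Theorem \ref{thm-RealLang} by constructing, for each real oper, an explicit eigenfunction of the Baxter operators via the Separation of Variables transformation, and conversely showing every eigenfunction arises this way. I would proceed as follows.

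\textbf{Step 1: The SoV transformation.} First I would construct a unitary integral operator $\mathsf{U}$ intertwining $\CH_i$ with an ``SoV representation'' $L^2\big(\mathrm{Sym}^{d}(C),d\mu\big)$ (or a related model on $d=n-3$ copies of $C$), in which the Baxter operators $\mathsf{Q}_y$ act as multiplication by a function built from the solution data. The kernel of $\mathsf{U}$ should be built from products of solutions of the oper-type ordinary differential equation $(\pa_y^2+t(y))\chi=0$; concretely, one expects the kernel to be a product over the $d$ SoV variables of fixed ``elementary'' solutions, with the $z_r$-dependent prefactors from \rf{Heckedef} built in. The key analytic input is unitarity of $\mathsf{U}$, which transfers square-integrability questions on $\mathrm{Bun}_G(C)$ (where the wobbly locus causes trouble) to the far more tractable question of $L^2$-integrability of products of ODE solutions against the SoV measure $d\mu$.

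\textbf{Step 2: Eigenfunctions from real opers.} Given a real oper $\pa_y^2+t(y)\in\mathrm{Op}_{\BR}(C)$, the associated single-valued solution $\chi(y,\bar y)$ of \rf{oper-eqns} exists by the remark preceding the theorem. I would then form $\Psi_\chi:=\mathsf{U}^{-1}$ applied to the appropriate product/delta-type distribution, and verify: (a) $\Psi_\chi\in\CH_i$, i.e. square-integrability, which reduces via unitarity of $\mathsf{U}$ and reality of the oper to finiteness of an explicit integral — the crucial point is that real monodromy forces the relevant quadratic combinations of solutions to decay appropriately near the punctures and at the apparent singularities coming from wobbly bundles; (b) $\mathsf{Q}_y\Psi_\chi=\chi(y,\bar y)\Psi_\chi$, which in the SoV representation is the statement that $\mathsf{Q}_y$ acts diagonally. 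For (b) the decisive geometric observation, flagged in the introduction, is that a Hecke modification $\mathbf{H}_y$ at a point $z\in C$ can be realised as an SoV transformation for the enlarged surface $C'=C_{0,n+1}$; comparing the SoV kernels for $C$ and $C'$ then expresses $\mathbf{H}_y\Psi_\chi$ in terms of the solution $\chi$ evaluated at the extra variable, and integrating out that variable produces the eigenvalue $\chi(y,\bar y)$ times $\Psi_\chi$. Then $\mathsf{Q}_y=\mathbf{H}_\infty\mathbf{H}_y$ gives the Baxter eigenvalue, and \rf{specthm} places $\chi\in\CS$.

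\textbf{Step 3: Surjectivity and one-dimensionality.} Conversely, for $\chi\in\CS$, pulling an eigenfunction through $\mathsf{U}$ gives a function on the SoV space on which all $\mathsf{Q}_y$ act by multiplication by the eigenvalue function $\chi(y,\bar y)$; this function of $y$ must then satisfy the operator ODE $(\pa_y^2+\mathsf{T}(y))\mathsf{Q}_w=0$ specialised to the eigenspace, forcing $\chi(y,\bar y)$ to come from an oper $\pa_y^2+t(y)$ with $t$ of the form \rf{t-exp-E}; single-valuedness of $\chi(y,\bar y)$ on $C$ forces that oper to be real. Finally, one-dimensionality of $\CH_\chi$ follows because the SoV representation identifies any eigenfunction with eigenvalue $\chi$ with a fixed rank-one object (the product of the solutions determined by the oper), so $\dim\CH_\chi\le 1$, while Step 2 shows $\dim\CH_\chi\ge 1$. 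Injectivity of $\chi\mapsto\text{oper}$ is the already-stated fact that the oper is uniquely determined by $\chi$.

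\textbf{Main obstacle.} I expect the hardest part to be Step 2(b), the Hecke-eigenvalue property: proving rigorously that the Hecke modification at $z$ is represented by the SoV transformation for $C_{0,n+1}$, and controlling the interchange of the $\BC$-integral defining $\mathbf{H}_y$ in \rf{Heckedef} with the SoV integral transform, including convergence at the punctures and at $x=x_r$. A secondary difficulty is establishing unitarity of $\mathsf{U}$ itself (Step 1) with sufficient precision to justify the $L^2$ manipulations; the square-integrability check in Step 2(a) should, by contrast, be comparatively routine once the reality of the oper is used, since it reduces to estimating explicit products of hypergeometric-type solutions near finitely many singular points.
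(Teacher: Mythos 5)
Your proposal follows essentially the same route as the paper: a unitary $\mathsf{SoV}$ transformation reducing square-integrability to integrability of products of oper solutions, the realisation of the Hecke modification at $z\in C_{0,n}$ as an $\mathsf{SoV}$ transformation for $C_{0,n+1}$ to establish the Hecke eigenvalue property, and the product form of the transformed eigenstates to get simplicity of the spectrum and the identification of eigenvalues with real opers via single-valuedness. You have also correctly located the main technical burden in the Hecke/$\mathsf{SoV}$ compatibility for the enlarged surface, which is exactly where the paper invests its effort (Sections 5--6).
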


\section{Construction of the SoV transformation}



The main tool for the proof of Theorem \ref{thm-RealLang} will be an integral
transformation referred to as Separation of Variables ($\mathsf{SoV}$) transformation that will be defined 
in this section. It serves to simplify the eigenvalue problems for Hitchin's Hamiltonians.

Let us define $\CH_{\rm Skl}^{}=L^2(\BC^{n-3},d\mu_{\rm Skl}^{})$, with 
\begin{equation}\label{def-muSkl}
d\mu_{\rm Skl}^{}(\bm{y})=d^{2(n-3)}\bm{y}\,
\prod_{\substack{r,s=1\\ r<s}}^{n-1}|z_r-z_s|^{-2}\prod_{\substack{k,l=1\\ k<l}}^{n-3}
|y_k-y_l|^2\prod_{r=1}^{n-1}\frac{1}{|\kappa_r(\bm{y})|^2},\end{equation}
where 
\begin{equation}
\quad \kappa_r(\bm{y}):=
\frac{\prod_{k=1}^{n-3}(z_r-y_k)}{\prod_{s\neq r}(z_r-z_s)},\qquad r=1,\dots,n-1.
\end{equation}
We will construct the  $\mathsf{SoV}$ transformations $\SS_{\bm{j},i}:\CH_i\ra\CH_{\rm Skl}^{}$,
$\bm{j}=(j_1,\dots,j_{n})$, $i=0,1$,
as the composition
\[
\SS_{\bm{j},i}=\mathsf{SoV}\circ \mathsf{F}_{\bm{j},i},
\]
where $\mathsf{F}_{\bm{j},i}$ is a twisted version 
of the Fourier transformation of homogeneous translation-invariant 
functions defined below, and  $\mathsf{SoV}$ is defined by a change of variables defined below.

\subsection{Fourier transformation}

We note that homogenous locally integrable functions $\Psi$ of $d+1$ variables $x_2,\dots,x_{d+2}$ naturally define 
homogenous regular distributions $D_{\Psi}$ on the Schwartz space 
$\CS(\BC^{d+1})$  by means of
\begin{equation}\label{regdist}
\langle D_{\Psi},f\rangle=\int_{\BC^{d+1}}d^{2(d+1)}\bm{x}\; \Psi(\bm{x})f(\bm{x}), \qquad 
f\in \CS(\BC^{d+1}).
\end{equation}
The Fourier transformation  of $D_{\Psi}$ can be defined in the distributional sense,
yielding a homogenous distribution 
$\widetilde{D}_\Psi$.
It turns out that the homogenous distribution 
$\widetilde{D}_\Psi$ is regular, meaning that there exists a 
a homogenous function $\widetilde{\Psi}$ such that $\widetilde{D}_{\Psi}=D_{\widetilde{\Psi}}$.
Let $\eta_\rho:\BR_+\ra \BR$ be a family of smooth functions satisfying $\eta_\rho(x)=1$ for $x<\rho$ and $\eta_\rho(x)=0$
for $x>2\rho$.
\begin{lem}\label{FT-lem}
$\widetilde{\Psi}$ can be represented for smooth homogenous functions $\Psi$ as the limit
\begin{equation}\label{FT-def}
\widetilde{\Psi}({\bm{k}})=
\lim_{\rho\ra\infty}\int_{\BC^{d+1}}d^{2(d+1)}{\bm{x}}\; 
{\Psi}(\bm{x})\,\eta_\rho^{}(\lVert {\bm{x}}\rVert)\,\prod_{r=2}^{d+2}
{e^{-2\mathrm{i}\,\mathrm{Im}(k_rx_r)}}, 
\end{equation}
where  ${\bm{k}}=(k_2,\dots,k_{d+2})$. In this case, \rf{FT-def}
defines a smooth function $\widetilde{\Psi}({\bm{k}})$.
\end{lem}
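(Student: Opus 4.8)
Write $N=2(d+1)$, identify $\BC^{d+1}\simeq\BR^{N}$, and for $g\in L^{1}(\BC^{d+1})$ set $\widetilde g(\bm k):=\int_{\BC^{d+1}}g(\bm x)\prod_{r=2}^{d+2}e^{-2\mathrm{i}\,\mathrm{Im}(k_rx_r)}\,d^{2(d+1)}\bm x$; writing $x_r=a_r+\mathrm{i}b_r$ and $k_r=u_r+\mathrm{i}v_r$ one has $2\sum_{r}\mathrm{Im}(k_rx_r)=2\sum_{r}(u_rb_r+v_ra_r)$, so $\widetilde g$ differs from the Euclidean Fourier transform of $g$ on $\BR^{N}$ only by precomposition with an invertible linear map of the $\bm k$-variables, a reparametrisation that affects neither homogeneity degrees nor smoothness. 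Since $\Psi$ is smooth on $\BC^{d+1}\setminus\{0\}$ and homogeneous of a complex degree $2J$ with $\mathrm{Re}(2J)>-N$ in the range of parameters considered, it is locally integrable and polynomially bounded, so \rf{regdist} defines a tempered distribution $D_{\Psi}$; by the classical fact that the Fourier transform sends a homogeneous distribution of degree $\alpha$ to one of degree $-N-\alpha$, the distribution $\widetilde D_{\Psi}$ is homogeneous of degree $-N-2J$, whose real part lies strictly between $-N$ and $0$ in our range.

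The plan is then to establish two things in turn: that $\widetilde D_{\Psi}$ is represented by a function smooth away from the origin, and that this function is the locally uniform limit \rf{FT-def}. For the first, I would fix $\chi\in C_{c}^{\infty}(\BR^{N})$ equal to $1$ on $\lVert\bm x\rVert\le1$ and to $0$ on $\lVert\bm x\rVert\ge2$, and write $\Psi=\chi\Psi+(1-\chi)\Psi$. The piece $\chi\Psi$ is an $L^{1}$ function of compact support, so $\widetilde{\chi\Psi}$ is entire. The piece $v:=(1-\chi)\Psi$ is smooth on all of $\BR^{N}$ and, by homogeneity of $\Psi$ together with smoothness on the unit sphere, satisfies $|\partial^{\gamma}v(\bm x)|\le C_{\gamma}(1+\lVert\bm x\rVert)^{\mathrm{Re}(2J)-|\gamma|}$ for every multi-index $\gamma$. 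Integrating by parts $m$ times in a single one of the $N$ real coordinates $x_j$ then gives $\widetilde{\partial_{x_j}^{m}v}(\bm k)=\big(c\,\ell_j(\bm k)\big)^{m}\,\widetilde v(\bm k)$ for a nonzero constant $c$ and a nonzero linear functional $\ell_j$, and $\widetilde{\partial_{x_j}^{m}v}\in C^{l}(\BR^{N})$ whenever $m>l+\mathrm{Re}(2J)+N$, since then $\bm x^{\beta}\partial_{x_j}^{m}v\in L^{1}$ for all $|\beta|\le l$. Dividing by $\big(c\,\ell_j\big)^{m}$ on $\{\ell_j\neq0\}$ and letting $m$, hence $l$, grow shows $\widetilde v\in C^{\infty}(\{\ell_j\neq0\})$; ranging over the $N$ coordinates, whose dual functionals have common zero only at the origin, yields $\widetilde v\in C^{\infty}(\BR^{N}\setminus\{0\})$. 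Thus $\widetilde D_{\Psi}$ coincides on $\BR^{N}\setminus\{0\}$ with a $C^{\infty}$ function $\widetilde\Psi$, homogeneous of degree $-N-2J$; as $\mathrm{Re}(-N-2J)>-N$, this $\widetilde\Psi$ is locally integrable, $D_{\widetilde\Psi}$ is homogeneous of that degree, and the difference $\widetilde D_{\Psi}-D_{\widetilde\Psi}$, being supported at $\{0\}$ while homogeneous of a degree with real part $>-N$, must vanish. Hence $\widetilde D_{\Psi}=D_{\widetilde\Psi}$.

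For the representation \rf{FT-def}, I would observe that for $\rho\ge2$ the integrand there is $\Psi\,\eta_{\rho}(\lVert\cdot\rVert)$, a smooth compactly supported function, so the integral equals $\widetilde{\Psi\eta_{\rho}}(\bm k)$. Splitting $\Psi\eta_{\rho}=\chi\Psi+v_{\rho}$ with $v_{\rho}:=(1-\chi)\Psi\,\eta_{\rho}$ (using $\eta_{\rho}\equiv1$ on $\mathrm{supp}\,\chi$), and using $|\partial^{\gamma}[\eta_{\rho}(\lVert\bm x\rVert)]|\le C_{\gamma}\rho^{-|\gamma|}$ with this derivative supported where $\lVert\bm x\rVert\sim\rho$, the Leibniz rule yields the \emph{same} bound $|\partial^{\gamma}v_{\rho}(\bm x)|\le C_{\gamma}(1+\lVert\bm x\rVert)^{\mathrm{Re}(2J)-|\gamma|}$, now \emph{uniformly in} $\rho\ge2$, with $v_{\rho}$ supported in $\lVert\bm x\rVert\le2\rho$. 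Consequently, for $m$ large, the functions $\bm x^{\beta}\partial_{x_j}^{m}v_{\rho}$ are dominated, uniformly in $\rho$, by one fixed $L^{1}$ function, and they converge pointwise as $\rho\to\infty$ to $\bm x^{\beta}\partial_{x_j}^{m}v$; dominated convergence then gives $\widetilde{\partial_{x_j}^{m}v_{\rho}}\to\widetilde{\partial_{x_j}^{m}v}$ in each $C^{l}(\BR^{N})$, and dividing by $\big(c\,\ell_j\big)^{m}$ as before gives $\widetilde{v_{\rho}}\to\widetilde v$ in $C^{\infty}_{\mathrm{loc}}(\BR^{N}\setminus\{0\})$. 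Therefore $\widetilde{\Psi\eta_{\rho}}\to\widetilde{\chi\Psi}+\widetilde v=\widetilde\Psi$ in $C^{\infty}_{\mathrm{loc}}(\BC^{d+1}\setminus\{0\})$, which is precisely the assertion \rf{FT-def} and exhibits the limit as a smooth function of $\bm k$ off the origin.

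I expect the only genuine difficulty to be the last step: upgrading the convergence $\Psi\eta_{\rho}\to\Psi$ from the weak (tempered-distribution) sense, which is immediate by dominated convergence against test functions, to locally uniform convergence of all $\bm k$-derivatives. The point that makes this work is that differentiating the cutoff $\eta_{\rho}$ costs a factor $\rho^{-|\gamma|}$, but only on the shell $\lVert\bm x\rVert\sim\rho$, where that loss is compensated by the size of $\bm x$; hence the estimates on $\partial^{\gamma}v_{\rho}$ are uniform in $\rho$ and a single integrable dominating function serves for all $\rho$. The remaining ingredients — the homogeneity bookkeeping under Fourier transform, the integration-by-parts argument for smoothness off the origin, and the vanishing of the correction supported at the origin — are standard facts about homogeneous distributions.
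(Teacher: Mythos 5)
Your argument is correct and is essentially the standard proof that the paper delegates to \cite[Lemma 1]{GS}: cut off near the origin, use homogeneity to get symbol-type estimates on $(1-\chi)\Psi$, integrate by parts repeatedly to gain decay in $\bm{k}$ away from $\bm{k}=0$, and kill the possible correction supported at the origin because its homogeneity degree has real part larger than $-2(d+1)$. Two harmless points: you implicitly assume the cutoff family satisfies $|\partial^\gamma \eta_\rho|\le C_\gamma\,\rho^{-|\gamma|}$ (e.g.\ $\eta_\rho(x)=\eta_1(x/\rho)$), which is the intended reading of the paper's loose definition of $\eta_\rho$, and the remark that $\Psi\,\eta_\rho$ is ``smooth'' should say only that it is integrable with compact support (it may be singular at the origin), which is all your subsequent splitting actually uses.
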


The proof of \cite[Lemma 1]{GS} can easily be adapted to our case where $d=n-3$.
We  note that 
\begin{equation}\label{scalingtildePsi}
\widetilde{\Psi}(\la{\bm{k}})=|\la|^{2\bar{J}} \, \widetilde{\Psi}({\bm{k}}),\qquad \bar{J}:=-J-d-1=-\frac{d+1}{2}-iS.
\end{equation}
The function 
$
\widetilde{\Psi}({\bm{k}})
$ 
therefore defines a homogenous regular distribution on $\BC^{d+1}$, represented by functions 
$\widetilde{\Psi}(k_2,\dots,k_{n-1})$ of the form
\begin{equation}\label{Psi-vs-psi}
\widetilde{\Psi}(k_2,\dots,k_{n-1})=|k_{n-1}|^{2\bar{J}}\, \widetilde{\psi}(k_2/k_{n-1},\dots,k_{n-2}/k_{n-1}).
\end{equation}
It follows from Lemma \rf{FT-lem} that the Fourier transformation maps the space $\CS_J$ of 
smooth sections of $\CL_J$ to the space $\CS_{-J-d-1}$. 
The definition of the norm given in \rf{Psi-norm} can be applied to $\widetilde{\Psi}$. 

\begin{thm}\label{F-unitarity}
The Fourier transformation sending $\psi\in \CS_{J}$ to $\widetilde{\psi}\in\CS_{-J-d-1}$ is unitary.
\end{thm}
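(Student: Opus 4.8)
The plan is to reduce the claimed unitarity to the standard Plancherel theorem on $L^2(\BC^{d+1})$ by disintegrating both the source and target Hilbert spaces along the scaling action of $\BC^\times$. First I would note that the norm $\lVert\Psi\rVert^2$ in \rf{Psi-norm}, although written via the chart \rf{Psi-psi}, is really the restriction of the ordinary $L^2(\BC^{d+1})$ inner product to the ``fundamental domain'' for scaling: concretely, a homogeneous function $\Psi$ of degree $J=-\frac{d+1}{2}-\mathrm iS$ (with $S\in\BR$, since $j_r\in-\tfrac12+\mathrm i\BR$ and $J\in-\frac{n-2}{2}+\mathrm i\BR$) is not square-integrable on all of $\BC^{d+1}$, but the pairing between $\CS_J$ and $\CS_{\bar J}$, $\bar J=-J-d-1$, given by $\langle\Psi,\Phi\rangle=\int_{\BC^{d+1}}\Psi\bar\Phi\,$ restricted to a slice $\lVert\bm x\rVert=1$ times the projective measure, is the natural sesquilinear pairing, and \rf{Psi-norm} is precisely this pairing with $\Phi=\Psi$ once one uses $J+\bar J=-(d+1)$ so that $\Psi\bar\Psi$ has the critical homogeneity degree $-2(d+1)$ making the radial integral ``formally'' the delta at zero momentum. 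The cleanest way to make this rigorous is Mellin transform in the radial variable: write $\bm x=\rho\,\bm u$ with $\rho=\lVert\bm x\rVert$ and $\bm u$ on the unit sphere $S^{2d+1}$, so $L^2(\BC^{d+1})\cong L^2(\BR_+,\rho^{2d+1}d\rho)\ \widehat\otimes\ L^2(S^{2d+1})$, and Mellin-transform the radial factor to get a direct integral over the unitary characters $\rho\mapsto\rho^{-(d+1)-\mathrm iT}$ of $\BC^\times/\mathrm{U}(1)$ (the extra $\mathrm{U}(1)$ handled by a Fourier series in the phase). Under this decomposition $\CH_J$ is exactly the fiber at the character of homogeneity $J$, with its norm being the $L^2(S^{2d+1})$-norm of the angular part — this is the content of the remark after \rf{Psi-norm} that $\lVert\Psi\rVert^2$ is representation-independent for $J\in-\frac{n-2}{2}+\mathrm i\BR$.

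Next I would invoke Lemma \ref{FT-lem} together with \rf{scalingtildePsi}: the Fourier transform $\mathcal F$ on $L^2(\BC^{d+1})$ is unitary and commutes with the scaling action in the sense that it intertwines the dilation unitary $U_\lambda$ with $U_{\lambda^{-1}}$ composed with the modular factor $|\lambda|^{-2(d+1)}$, hence it carries the homogeneity-$J$ fiber to the homogeneity-$\bar J$ fiber, $\bar J=-J-d-1$, which is the statement that $\mathcal F:\CS_J\to\CS_{\bar J}$. Because $\mathcal F$ is a decomposable operator with respect to the direct-integral decomposition along characters of $\BC^\times$, and because it is globally unitary, each fiber map is itself unitary (up to the unitary reparametrisation $T\mapsto -T$ of the character variable). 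Restricting to the single fiber indexed by the homogeneity degree $J$ of our sections gives exactly the asserted unitarity of $\psi\mapsto\widetilde\psi$ from $\CS_J$ to $\CS_{-J-d-1}$, and then one extends by density from $\CS_J$ to its completion $\CH_J$.

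A slightly more hands-on alternative, avoiding abstract direct integrals, is to prove the Parseval identity $\int_{\BC^d}|\psi|^2=\int_{\BC^d}|\widetilde\psi|^2$ directly: pick $\Psi\in\CS_J$ and a Schwartz cutoff $\phi$ with $\int_{\BC^\times}\phi(\lambda)\,|\lambda|^{?}\,d^2\lambda/|\lambda|^2$ normalised appropriately, form $\Psi_\phi(\bm x):=\phi(\lVert\bm x\rVert)\,(\text{angular part of }\Psi)$ — more precisely mollify the radial direction so $\Psi_\phi\in L^2(\BC^{d+1})$ — apply ordinary Plancherel to $\Psi_\phi$, and let the cutoff approach the critical character; the scaling homogeneity \rf{scalingtildePsi} forces all the $\rho$-integrals to collapse onto the matching slices and one is left with the chart integrals of \rf{Psi-norm} on both sides, the $|\lambda|^{2\bar J}$ factor on the Fourier side exactly compensating the $|\lambda|^{2J}$ factor on the original side because $J+\bar J+(d+1)=0$. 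Either way the one genuinely delicate point — and the step I expect to be the main obstacle — is the interchange of the $\rho\to\infty$ regularising limit in \rf{FT-def} with the angular integration defining the norm: one must show that the regularised Fourier integrals converge to $\widetilde\Psi$ not merely pointwise (which Lemma \ref{FT-lem} gives) but in a sense strong enough to pass to $L^2$ of the angular slice, i.e. that no mass escapes to infinity in the radial variable; this is where the homogeneity critical exponent $\bar J=-\frac{d+1}{2}-\mathrm iS$ lying exactly on the unitary axis is used, and it is the analogue, in the present multivariable homogeneous setting, of the standard fact that the Fourier transform of a tempered homogeneous distribution of critical degree is again of critical degree, carefully quantified so as to yield an isometry rather than merely a bounded map. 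Once that uniformity is in hand, density of $\CS_J$ in $\CH_J$ and of the image in $\CH_{\bar J}$ (the latter following by symmetry, applying the same argument to the inverse Fourier transform) upgrades the isometry to a surjective isometry, completing the proof.
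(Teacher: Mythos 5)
The paper does not actually spell out a proof of this theorem: it simply states that the argument of \cite[Section 3]{GS} (Gressman--Stein, on the Fourier transform of homogeneous distributions of critical degree) adapts to the present setting. Your sketch is, in substance, a reconstruction of that argument: disintegrate $L^2(\BC^{d+1})$ along the dilation action, observe that the (unitarily normalised) Fourier transform intertwines the character of homogeneity $J$ with that of homogeneity $-J-d-1$, and use that both lie on the unitary axis precisely because $\mathrm{Re}(J)=-\frac{d+1}{2}$; your identification of the chart norm \rf{Psi-norm} with the $L^2$-norm of the angular part on a slice is also the right way to see that the fiber Hilbert spaces are the $\CH_J$. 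Two remarks. First, the abstract direct-integral version of your argument only yields fiberwise unitarity for \emph{almost every} character, so as written it does not conclude for the particular $J$ at hand; you need either your second, more hands-on variant (mollify radially, apply Plancherel, pass to the limit for the fixed $J$), or a continuity-in-$J$ argument for smooth angular data, to upgrade ``a.e.''\ to ``every $J$ on the critical line''. Second, you correctly identify the genuinely delicate point, namely that the regularised limit \rf{FT-def} must be controlled in $L^2$ of the angular slice and not merely pointwise; this is exactly the content of the quantitative estimates in \cite{GS}, so your proposal is consistent with, rather than an alternative to, the route the paper takes.
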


The proof in \cite[Section 3]{GS} can be adapted to 
our case.
This implies that the maps $\mathsf{F}_{\bm{j},i}$ have natural extensions from $\CS_{J_i}$ to the spaces 
$\CH_{J_i}$ of square-integrable sections of $\CL_{J_i}$.

\subsection{Separation of Variables}

A key role will be played by the change of variables $k_r=k_r(\bm{y},\rho)$ defined by 
\begin{equation}\label{ch-of-var-SoV}
\sum_{r=1}^{n-1}\frac{
k_r(\bm{y},\rho)}{y-z_r}=\rho\,\frac{\prod_{k=1}^{n-3}(y-y_k)}{\prod_{r=1}^{n-1}{(y-z_r)}}.
\end{equation}
In order to describe the induced maps between 
spaces of functions, we shall use ``twisted'' versions $\mathsf{F}_{\bm{j},i}$
of the Fourier transformation, sending $\Psi$ to the functions $\Phi$, defined as
\begin{equation}
\Phi(\bm{k})=\bigg|\sum_{r=1}^{n-1}z_rk_r\bigg|^{-2(j_{n,i}+1)}\prod_{r=1}^{n-1}|k_r|^{2j_r+2}\, \widetilde{\Psi}(\bm{k}),
\end{equation}
for $i=0,1$, using the notations $j_{n,i}$ introduced in \rf{jidef}.  
We may then note the simple identity
\begin{equation}\label{Phi-Phi}
\Phi(\bm{k}(\bm{y},\rho))=\Phi(\bm{\kappa}(\bm{y})), \qquad \bm{\kappa}(\bm{y}):=\frac{1}{\rho}\bm{k}(\bm{y},\rho),
\end{equation}
following from   \rf{scalingtildePsi} and $\sum_{r=1}^{n-1}z_rk_r(\bm{y},\rho)=\rho$.
The identity \rf{Phi-Phi} allows  us to define 
\begin{equation}\label{tildePsi-to-Phi}
\mathsf{SoV}({\Phi})(\bm{y}):=\phi(\bm{y}),\qquad \phi(\bm{y}):=\Phi(\bm{\kappa}(\bm{y})).
\end{equation}
We then have

\begin{lem}\label{SOV-unitarity}
The map $\mathsf{SoV}:{\CH}_{-J-d-1}\ra \CH_{\rm Skl}^{}$ 
defined above is unitary.
\end{lem}
\begin{proof}We note that 
the change of variables defined by 
\begin{equation}
\la_r(\bm{y}):=\frac{\kappa_r(\bm{y})}{\kappa_{n-1}(\bm{y})},\quad r=2,\dots,n-2, 
\end{equation}
induces the following change of measures
\begin{equation}\label{change-to-Sklyanin}
d^{2(n-3)}\bm{\lambda} =
\frac{d^{2(n-3)}\bm{y}}{|\kappa_{n-1}(\bm{y})|^{2(n-2)}} 
\prod_{\substack{r,s=1\\ r<s}}^{n-1}|z_r-z_s|^{-2}
\prod_{\substack{k,l=1\\ k<l}}^{n-3}
|y_k-y_l|^2.
\end{equation}
It furthermore follows from \rf{Psi-vs-psi} and \rf{tildePsi-to-Phi} that
\begin{equation}\label{relabsval}
\big|\psi\big(\bm{\lambda}(\bm{y})\big)\big|^2=|\phi(\bm{y})|^2\,
{|\kappa_{n-1}(\bm{y})|^{2(n-2)}} \prod_{r=1}^{n-1}\frac{1}{|\kappa_r(\bm{y})|^2}.
\end{equation}
By combining \rf{change-to-Sklyanin} and \rf{relabsval} it follows that
\begin{equation}
\int_{\BC^{n-3}}d^{2(n-3)}\bm{\lambda} \;|\psi(\bm{\lambda})|^2=
\int_{\BC^{n-3}}d\mu_{\rm Skl}^{}(\bm{y})\;|\phi(\bm{y})|^2,
\end{equation}
which is what we wanted to prove.
\end{proof}
Unitarity of the map $\mathsf{SoV}$ follows by
combining Theorem \ref{F-unitarity} and Lemma \ref{SOV-unitarity}. 

For any given differential operator $\mathsf{D}$ of the form
\[
\mathsf{D}(z)=\sum_{\al\in\CI}f_{\al}(z)\mathsf{D}^{(\al)},
\]
with $\{\mathsf{D}^{(\al)};\al\in\CI\}$ being a finite set of  differential operators
in the variables $k_r$, and $f_{\al}$, $\al\in\CI$ being holomorphic functions on $C=\mathbb{P}^1\setminus\{z_1,\dots,z_n\}$,
we may define a  differential operator $\mathsf{D}(y_k)$ by
substituting $z$ by $y_k$ ``from the left'', and pullback under \rf{ch-of-var-SoV}, more precisely 
\begin{equation}
{\mathsf{D}(y_k)}:=
\sum_{\al\in\CI}f_{\al}(y_k)\mathsf{D}^{(\al)}_{\bm{y},\rho},
\end{equation}
with $\mathsf{D}^{(\al)}_{\bm{y},\rho}$, $\al\in\CI$, being the differential operators in the variables $y_1,\dots,y_d$ and $\rho$
associated to $\mathsf{D}^{(\al)}$
by the change of variables \rf{ch-of-var-SoV}.

A crucial property of the map $\mathsf{SoV}$
can now be formulated as follows.
\begin{propn}\label{SoV-propn}
The Fourier transformation $\widetilde{\mathsf{T}}(z):=\mathsf{F}_{\bm{j},i}\cdot\mathsf{T}(z)\cdot\mathsf{F}^{-1}_{\bm{j},i}$ 
of $\mathsf{T}(z)$ satisfies
\begin{equation}\label{SoV-intertw}
\widetilde{\mathsf{T}}(y_k)=-\pa^2_{y_k}.
\end{equation}
\end{propn}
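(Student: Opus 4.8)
Equation~\rf{SoV-intertw} is the quantum counterpart of Sklyanin's separation of variables for the $\mathfrak{sl}_2$ Gaudin model: the ``raising'' component $\widetilde{\mathcal{L}}^{+}$ of the Fourier‑transformed Lax operator will play the role of the Sklyanin $B$‑operator, the change of variables~\rf{ch-of-var-SoV} diagonalises it, the $y_k$ are precisely its zeros, and~\rf{SoV-intertw} says that the quantum spectral curve $\widetilde{\mathsf{T}}(z)$, restricted to $z=y_k$, expresses $\mathsf{T}(y_k)$ through $y_k$ and its conjugate momentum alone. Concretely the plan has three steps: (i) rewrite $\mathsf{T}(z)$ as a quadratic expression in a Lax operator assembled from the $\mathfrak{sl}_2$‑currents $\mathcal{D}_r^{a}$; (ii) conjugate by $\mathsf{F}_{\bm{j},i}$ and perform the substitution~\rf{ch-of-var-SoV}, obtaining an explicit second‑order differential operator $\widetilde{\mathsf{T}}(z)$ in $\bm{y},\rho$ with coefficients rational in $z$; (iii) set $z=y_k$ ``from the left'' as prescribed above and verify that everything except $-\pa_{y_k}^{2}$ cancels.

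\emph{Step (i).} Put $\mathcal{L}^{a}(z)=\sum_{r=1}^{n-1}\frac{\mathcal{D}_r^{a}}{z-z_r}$, $a\in\{+,0,-\}$. On $\CH_i$ the translation‑ and scaling‑invariance built into the definition of the space translate into $\sum_{r=1}^{n-1}\mathcal{D}_r^{+}=0$ and $\sum_{r=1}^{n-1}\mathcal{D}_r^{0}=-j_{n,i}$, with $j_{n,i}$ as in~\rf{jidef}; using these one checks that $\mathsf{T}(z)$ coincides, up to $c$‑number terms reproducing the $\de_r/(z-z_r)^{2}$ poles, with a fixed quadratic expression in $\mathcal{L}^{+}(z),\mathcal{L}^{0}(z),\mathcal{L}^{-}(z)$ (essentially $\eta_{ab}\mathcal{L}^{a}(z)\mathcal{L}^{b}(z)$), and that $\lim_{z\to\infty}z^{4}\mathsf{T}(z)$ is automatically finite. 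The reorderings that will arise are governed by the $\mathfrak{sl}_2$ relations among the $\mathcal{D}_r^{a}$.

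\emph{Step (ii).} The Fourier transformation of Lemma~\ref{FT-lem} intertwines $\pa_{x_r}$ with multiplication by $k_r$ and $x_r$ with $-\pa_{k_r}$, so that after the additional conjugation by the twist $\prod_{r}|k_r|^{2j_r+2}\,|\rho(\bm{k})|^{-2(j_{n,i}+1)}$ and the substitution $k_r=\rho\,\kappa_r(\bm{y})$ one obtains explicit formulas for the transformed currents $\widetilde{\mathcal{L}}^{a}(z)$. The decisive feature is that $\widetilde{\mathcal{D}}_r^{+}=k_r$ is a \emph{multiplication} operator, whence
\[\widetilde{\mathcal{L}}^{+}(z)=\rho\sum_{r=1}^{n-1}\frac{\kappa_r(\bm{y})}{z-z_r}=\rho\,\frac{\prod_{l=1}^{n-3}(z-y_l)}{\prod_{s=1}^{n-1}(z-z_s)},\]
the second equality being Lagrange interpolation of the monic degree‑$(n-3)$ polynomial $\prod_l(z-y_l)$ at the nodes $z_1,\dots,z_{n-1}$. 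One also checks that $\rho(\bm{k})=\rho$ after the substitution, consistently with the $\rho$‑independence of $\Phi$ noted before Lemma~\ref{SOV-unitarity}, so that $\widetilde{\mathsf{T}}(z)$ descends to an operator in the variables $\bm{y}$ alone. The currents $\widetilde{\mathcal{L}}^{0}(z)$ and $\widetilde{\mathcal{L}}^{-}(z)$ come out first‑ respectively second‑order in $\bm{y},\rho$; in particular the first‑order part of $\widetilde{\mathcal{L}}^{0}(z)$ is $-\sum_{r}\frac{k_r\pa_{k_r}}{z-z_r}$, and a short computation of this vector field under~\rf{ch-of-var-SoV} (a polynomial identity) shows that at $z=y_k$ it equals $-\pa_{y_k}$ modulo a zeroth‑order term.

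\emph{Step (iii) and the main difficulty.} Write $\widetilde{\mathsf{T}}(z)$ with all multiplication functions to the left of all $\bm{y}$‑derivatives and set $z=y_k$. The key point is that $\widetilde{\mathcal{L}}^{+}(z)\big|_{z=y_k}$ is identically zero; that among its first $\bm{y},\rho$‑derivatives only $\big(\pa_{y_k}\widetilde{\mathcal{L}}^{+}(z)\big)\big|_{z=y_k}=-\rho\prod_{l\neq k}(y_k-y_l)/\prod_s(y_k-z_s)$ is nonzero; and that $\widetilde{\mathcal{L}}^{+}$ is linear in $y_k$, so its second $\pa_{y_k}$‑derivative vanishes. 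Consequently every term of $\widetilde{\mathsf{T}}(y_k)$ in which $\widetilde{\mathcal{L}}^{+}$ occurs undifferentiated drops out, so that the second‑order part of $\widetilde{\mathsf{T}}(y_k)$ comes entirely from $(\widetilde{\mathcal{L}}^{0}(y_k))^{2}$ and, by Step~(ii) and the normalisation of $\mathsf{T}$, equals $-\pa_{y_k}^{2}$. The substantive part of the proof is then to check that the remaining first‑ and zeroth‑order terms of $\widetilde{\mathsf{T}}(y_k)$ cancel identically: these collect contributions from the $c$‑numbers of Step~(i), from the reorderings of the Lax components (via the $\mathfrak{sl}_2$ relations, e.g.\ from $[\widetilde{\mathcal{L}}^{-},\widetilde{\mathcal{L}}^{+}](z)\big|_{z=y_k}$), from the Jacobian of~\rf{ch-of-var-SoV}, and from the logarithmic derivatives of the twist factors --- and it is precisely the exponents $2j_r+2$ and $-2(j_{n,i}+1)$ in the twist that are engineered to make this sum vanish. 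Carrying out and organising this bookkeeping is the main obstacle; alternatively it can be obtained from the classical $\mathsf{SoV}$ transformation of~\cite{DT} by tracking the quantum (operator‑ordering) corrections.
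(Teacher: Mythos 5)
Your strategy is essentially the one the paper follows: write $\mathsf{T}(z)$ quadratically in the Lax currents, Fourier-transform so that $\widetilde{\CD}_r^{+}=k_r$ becomes multiplication, identify $\widetilde{\CL}^{+}(z)$ with $\rho\prod_l(z-y_l)/\prod_s(z-z_s)$ by Lagrange interpolation so that it vanishes at $z=y_k$, and use the vector-field identity $\sum_r\frac{k_r}{y_k-z_r}\pa_{k_r}=\pa_{y_k}$ to extract $-\pa_{y_k}^2$ from the square of the $0$-current. These are exactly the ingredients \rf{tildeTdef}--\rf{secondkey-SOV} of the paper: your $\widetilde{\CL}^{+}$ is the paper's $\mathsf{c}(z)$ and your first-order part of $\widetilde{\CL}^{0}$ is the paper's $\mathsf{a}(z)$.

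However, as written your argument is incomplete at precisely the point you flag as ``the main obstacle'': you assert, but do not verify, that the residual first- and zeroth-order terms (c-numbers, reordering terms, Jacobian and twist contributions) cancel. The paper shows that this cancellation is not a long bookkeeping exercise but a one-line consequence of putting $\widetilde{\mathsf{T}}(z)$ into the specific normal-ordered form $\widetilde{\mathsf{T}}(z)=-(\mathsf{a}(z))^2+\mathsf{a}'(z)-\mathsf{c}(z)\mathsf{b}(z)$ with $\mathsf{c}$ standing to the \emph{left} of $\mathsf{b}$. With that ordering, every term involving $\mathsf{b}$ is annihilated outright by $\mathsf{c}(y_k)=0$ (so no commutator $[\widetilde{\CL}^-,\widetilde{\CL}^+]$ contributions survive, contrary to what your list suggests one must track), and the only reordering term produced by the ``substitution from the left'' is $\mathsf{a}^2(y_k)=\pa_{y_k}\mathsf{a}(y_k)=\pa_{y_k}^2+\mathsf{a}'(y_k)$, whose extra piece is cancelled identically by the explicit $+\mathsf{a}'(z)$ in \rf{tildeTdef}. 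So to turn your proposal into a proof you need to (i) actually derive the explicit form of $\widetilde{\mathsf{T}}(z)$ after the twist by $\prod_r|k_r|^{2j_r+2}|\rho(\bm{k})|^{-2(j_{n,i}+1)}$ --- this is where the exponents you mention enter, producing the $\de_r/k_r$ term in $\mathsf{b}(z)$ and the precise $\mathsf{a}'(z)$ term --- and (ii) observe that with this ordering the cancellation is automatic rather than a sum over many sources. Without step (i) being carried out, the claim that ``the twist is engineered to make this sum vanish'' remains an unproven assertion, and it is the substantive content of the proposition.
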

By composing \rf{SoV-intertw} with the Fourier transformations $\mathsf{F}_{\bm{j},i}$, it follows from
\rf{SoV-intertw} that $\mathsf{S}_{\bm{j},i}$ intertwines the eigenvalue equations for the
Hamiltonians $\SH_r$, or equivalently the equations $\mathsf{T}(z)\Psi=t(z)\Psi$,
with the decoupled  system of equations 
\begin{equation}
-\pa_{y_k}^2\Phi(\bm{y})=t(y_k)\Phi(\bm{y}), \qquad k=1,\dots,d.
\end{equation}

This fact has been discovered in a closely related context in \cite{Skl}. It motivates the terminology ``Separation of Variables''.
A self-contained proof 
is given below. 

\begin{proof}
As a preparation, 
let us note that $\widetilde{\mathsf{T}}(z)$ can 
be more explicitly represented as 
\begin{equation}\label{tildeTdef}
\widetilde{\mathsf{T}}(z)=-(\mathsf{a}(z))^2+\mathsf{a}'(z)-\mathsf{c}(z)\mathsf{b}(z),
\end{equation}
where 
\begin{equation}\label{abcdef}
\mathsf{c}(z)=\sum_{r=1}^{n-1}\frac{k_r}{z-z_r} ,
\quad
\mathsf{a}(z)=\sum_{r=1}^{n-1}\frac{-k_r}{z-z_r} \frac{\pa}{\pa k_r},
\quad
\mathsf{b}(z)=\sum_{r=1}^{n-1}\frac{1}{z-z_r} \bigg(\frac{\de_r}{k_r}-k_r\frac{\pa^2}{\pa k_r^2}\bigg).
\end{equation}
We may observe that equations \rf{ch-of-var-SoV} imply
\[
\frac{\pa}{\pa y_k}F(\bm{k}(\bm{y},\rho))=\sum_{r=1}^{n-1}\frac{\pa k_r}{\pa y_k}\frac{\pa }{\pa k_r}F(\bm{k}(\bm{y},\rho))
=\sum_{r=1}^{n-1}\frac{k_r}{y_k-z_r}\frac{\pa }{\pa k_r}F(\bm{k}(\bm{y},\rho)),
\]
yielding the key identities
\begin{equation}\label{key-SOV}
{{\mathsf{a}}(y_k)}
=-\frac{\pa}{\pa y_k},\qquad k=1,\dots,n-3.
\end{equation}
Equations \rf{key-SOV} furthermore imply 
\begin{equation}\label{secondkey-SOV}
{{\mathsf{a}}^2(y_k)}=\frac{\pa}{\pa y_k}{\mathsf{a}}(y_k)
=\frac{\pa^2}{\pa y_k^2}+{\mathsf{a}}'(y_k).
\end{equation}
With the help of  \rf{tildeTdef} and \rf{secondkey-SOV} it is now easy to see that
\begin{equation}
\widetilde{\mathsf{T}}(y_k)=-\frac{\pa^2}{\pa y_k^2},\qquad k=1,\dots,n-3.
\end{equation}
This is equivalent to \rf{SoV-intertw}.\end{proof}



\subsection{The inverse transformation}

Unitarity of the SoV-transformation ensures existence of an inverse transformation. It can be realised explicitly 
by inverting the relation \rf{tildePsi-to-Phi}, taking advantage of the fact that equations \rf{ch-of-var-SoV} can be used to
define functions $y_k=y_k(\bm{k})$ up to permutations $y_k\mapsto y_l$, $y_l\mapsto y_k$.
The resulting map $\mathsf{SoV}^{-1}$ takes elements $\phi$ 
of $\CH_{\rm Skl}^{}$ to scaling-invariant functions $\Phi(\bm{k})$ of $n-2$ variables $\bm{k}=(k_2,\dots,k_{n-1})$.

It next follows from \rf{SoV-intertw} that $\mathsf{S}_{\bm{j},i}^{-1}$ maps 
elements $\phi_{\chi}$ of  $\CH_{\rm Skl}^{}$ having the form 
\begin{equation}
\phi_{\chi}(\bm{y})=\prod_{k=1}^{n-3}\chi(y_k,\bar{y}_k),
\qquad
\begin{aligned} &
(\pa_y^2+t(y))\chi(y,\bar{y})=0,\\
&(\pa_{\bar{y}}^2+\bar{t}(\bar{y}))\chi(y,\bar{y})=0,
\end{aligned}
\end{equation}
to eigenfunctions $\Psi_{\chi,i}$ of the differential operators
$\mathsf{H}_r$, with eigenvalues $E_r$ determined by $t(y)$ using \rf{t-exp-E}. 
We shall next describe the main analytic properties of the functions $\Psi_{\chi,i}=\mathsf{S}_{\bm{j},i}^{-1}(\phi_{\chi})$.

\subsubsection{Square-integrability}

Square-integrability of the functions $\Psi_{\chi,i}=\mathsf{S}_{\bm{j},i}^{-1}(\phi_{\chi})$ will follow with the help of the 
unitarity of the SoV-transformation from the following lemma. 
\begin{lem} 
The functions $\phi_{\chi}$ satisfy
\[
\int_{\BC^{n-3}}d\mu_{\rm Skl}^{}(\bm{y})\;|\phi_{\chi}(\bm{y})|^2<\infty,
\] 
and therefore represent
elements of $\CH_{\rm Skl}^{}$.
\end{lem}
\begin{proof} This can be verified using \rf{def-muSkl}, and taking into account that 
$\chi$ is real analytic on $C$, having singular behaviour of the form 
\begin{equation}\label{sing-chi}
\chi(y,\bar{y})= \sum_{\ep=\pm 1}|y-z_r|^{1+\ep(2j_r+1)}C_{\ep}^{(r)}(\bm{j})\big(1+\CO(|y-z_r|)\big),
\end{equation}
near the punctures $z_r$, $r=1,\dots,n$. By using the explicit form of $d\mu_{\rm Skl}^{}(\bm{y})$
given in \rf{def-muSkl}
one may easily verify that $d\mu_{\rm Skl}^{}(\bm{y})\;|\phi_{\chi}(\bm{y})|^2$ is integrable 
with respect to $y_k$ near $y_k=z_r$ for all $k=1,\dots,n-3$, and $r=1,\dots,n-1$, and furthermore
\begin{align}\label{Sklmeas-infty}
&d\mu_{\rm Skl}^{}(\bm{y})\,|\phi_{\chi}(\bm{y})|^2=\\[-1ex]
&\qquad=|y_k|^{-6} \cdot |y_k|^{4}\,d^2\bigg(\frac{1}{y_k}\bigg)\cdot
\Bigg|\sum_{\ep=\pm 1}C_{\ep}^{(n)}(\bm{j})\,|y_k|^{1-\ep(2j_n+1)}\big(1+\CO(y_k^{-1})\big)\Bigg|^2
d\check{\mu}_{\rm Skl}^{}(\check{\bm{y}}),\notag\end{align}
with $\check{\bm{y}}=(y_1,\dots,y_{k-1},y_{k+1},\dots,y_{n-3})$, and $d\check{\mu}_{\rm Skl}^{}(\check{\bm{y}})$
obtained from $d\mu_{\rm Skl}^{}(\bm{y})$ by omitting all factors containing $y_k$. Integrability with respect to $y_k$ near 
$z_n=\infty$ now follows from \rf{Sklmeas-infty}.
\end{proof}

\begin{rem} A coordinate-free description of $\mu_{\rm Skl}^{}(\bm{y})$ has been proposed in \cite[Section 3.3.2]{GT}.
\end{rem}

\subsubsection{Real-analyticity}

One should note, however, that the $L^2$-functions $\Psi_{\chi,i}=\mathsf{S}_{\bm{j},i}^{-1}(\phi_{\chi})$ are not necessarily 
point-wise defined, being defined by the Fourier transformation of homogenous distributions.

\begin{lem}
The distributions $\Psi_{\chi,i}$ solve the holonomic system of differential equations
\[
\mathsf{H}_r \Psi_{\chi,i}=E_r\Psi_{\chi,i}, \quad r=2,\dots,n-2, \quad 
\sum_{r=1}^{n-1}(x_r\pa_{x_r}-j_r)\Psi_{\chi,i}=0,\quad
\sum_{r=1}^{n-1}\pa_{x_r}\Psi_{\chi,i}=0,
\]
together with the complex conjugate equations. 
\end{lem}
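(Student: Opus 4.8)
The plan is to transport the differential equations that are known to hold on the Sklyanin side back through the unitary $\mathsf{SoV}$ transformation, being careful that all statements are interpreted distributionally. First I would record that, by construction, $\phi_\chi(\bm y)=\prod_{k=1}^{n-3}\chi(y_k,\bar y_k)$ satisfies the decoupled system $(\pa_{y_k}^2+t(y_k))\phi_\chi=0$ for $k=1,\dots,n-3$, together with the complex conjugate equations; here $t(y)$ is the oper potential, whose residues and coefficients of double poles are $E_r$ and $\de_r$ in the notation of \rf{t-exp-E}. I would then apply $\mathsf{SoV}^{-1}$, that is, undo the change of variables \rf{ch-of-var-SoV} and the twist, to obtain the function $\widetilde\Psi=\mathsf{SoV}^{-1}(\phi_\chi)$ on the $\bm k$-side. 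The content of Proposition \ref{SoV-propn}, read backwards, is precisely that $\mathsf{SoV}^{-1}$ intertwines $-\pa_{y_k}^2$ with $\widetilde{\mathsf T}(y_k)$, so that $\widetilde\Psi$ solves $\widetilde{\mathsf T}(z)\widetilde\Psi=0$ in the sense that the residues of $\widetilde{\mathsf T}(z)\widetilde\Psi$ at $z=z_r$ vanish; reading off residues at $z=z_r$ from the expansion $\widetilde{\mathsf T}(z)=\sum_r\big(\de_r/(z-z_r)^2+\widetilde{\mathsf H}_r/(z-z_r)\big)$ gives $\widetilde{\mathsf H}_r\widetilde\Psi=E_r\widetilde\Psi$ for all $r$, where $\widetilde{\mathsf H}_r=\mathsf F_{\bm j,i}\cdot\mathsf H_r\cdot\mathsf F^{-1}_{\bm j,i}$. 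Finally I would apply the inverse Fourier transformation $\mathsf F_{\bm j,i}^{-1}$; by Theorem \ref{F-unitarity} this is unitary, and conjugating the operator identity $\widetilde{\mathsf H}_r\widetilde\Psi=E_r\widetilde\Psi$ back yields $\mathsf H_r\Psi_{\chi,i}=E_r\Psi_{\chi,i}$ as an identity of distributions on the space of homogeneous translation-invariant functions of $x_1,\dots,x_{n-1}$.

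For the residual scaling/translation constraint $\sum_{r=2}^{n-1}(x_r\pa_r-j_rx_r)\Psi_{\chi,i}=0$, I would argue separately: this operator is (up to the shift by $\sum j_r$) the infinitesimal generator of the dilation action on $\BC^{n-1}$, and $\Psi_{\chi,i}$ lies in $\CH_i$, hence is by definition homogeneous of the prescribed degree $J_i$; differentiating the homogeneity relation $\Psi_i(\rho\bm x)=|\rho|^{2J_i}\Psi_i(\bm x)$ in $\rho$ at $\rho=1$ gives exactly this first-order equation (after using translation invariance to fix $x_1$, which reduces the Euler operator on $n$ variables to the stated sum over $r=2,\dots,n-1$). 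Equivalently, this constraint is built into the definition of the Hilbert space and is preserved by every map in the chain $\mathsf F_{\bm j,i}$, change of variables, and their inverses, since each was constructed to respect homogeneity. The complex-conjugate equations follow by the identical argument applied to the antiholomorphic half of \rf{oper-eqns}, or simply by complex conjugation, since all the operators involved have real coefficients in the appropriate sense and $d\mu_{\rm Skl}$ is real.

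The one point requiring genuine care — and I expect it to be the main obstacle — is the \emph{distributional} justification of the intertwining step: $\Psi_{\chi,i}$ is a priori only an $L^2$-function defined as a Fourier transform of a homogeneous distribution, not pointwise, so one must check that $\mathsf H_r$, $\widetilde{\mathsf H}_r$, and the operators $\mathsf a(z),\mathsf b(z),\mathsf c(z)$ of \rf{abcdef} act legitimately on these distributions and that Proposition \ref{SoV-propn}, proven above at the level of smooth functions, extends. The clean way to handle this is to test against the dense subspace $\CS_{J_i}$ of smooth sections: for $f$ in this space one has $\langle\mathsf H_r\Psi_{\chi,i},f\rangle=\langle\Psi_{\chi,i},\mathsf H_r^{\dagger}f\rangle$, and $\mathsf H_r^{\dagger}f$ is again Schwartz-class, so one may push the pairing through $\mathsf F_{\bm j,i}$ and the change of variables — all of which are continuous on the relevant distribution spaces by Lemma \ref{FT-lem} and Lemma \ref{SOV-unitarity} — and reduce to the identity $(\pa_{y_k}^2+t(y_k))\phi_\chi=0$, which holds pointwise since $\chi$ is real-analytic away from the punctures and has the controlled local behaviour \rf{sing-chi} there, so no boundary terms are produced in the integration by parts on $\CH_{\rm Skl}$. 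One must also verify that the singular behaviour \rf{sing-chi} is mild enough that the products $\de_r/(y_k-z_r)^2$ hitting $\phi_\chi$ stay locally integrable against $d\mu_{\rm Skl}$, so that the pairing computations are unambiguous; this is exactly the estimate already used to show $\phi_\chi\in\CH_{\rm Skl}$.
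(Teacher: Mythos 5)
Your proposal follows exactly the route the paper intends: the paper states this lemma without a separate proof, relying on Proposition \ref{SoV-propn} (the intertwining $\widetilde{\mathsf{T}}(y_k)=-\pa_{y_k}^2$, transported back through $\mathsf{F}_{\bm{j},i}^{-1}$ and the change of variables) together with the homogeneity and translation invariance built into the construction, which is precisely your argument. Your additional care about the distributional interpretation and the residue-matching step that extracts $\mathsf{H}_r\Psi=E_r\Psi$ from the generating function $\mathsf{T}(z)$ is correct and, if anything, more explicit than the paper.
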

Elliptic regularity for holonomic systems implies that $\Psi_{\chi,i}$ can be represented by
functions $\Psi_{\chi,i}(\bm{x})$ which are real-analytic away from the singular locus of the differential operators 
$\mathsf{H}_r$. 

\begin{rem} 
The points in the singular locus  correspond to the so-called wobbly bundles 
\cite{EFK}. 
However, one should note that the type of singular behaviour that can occur at the wobbly locus is strongly  constrained by the unitarity of the SoV transformation. As $\Psi_{\chi,i}$ are in the image of 
$\CH_{\rm Skl}^{}$ under the unitary operators $\mathsf{S}_{\bm{j},i}^{-1}$, it is ensured that the singularities of $\Psi_{\chi,i}$ 
can not spoil
square-integrability. The unitarity of the SoV-transformation allows us to avoid a detailed 
analysis of the singularities that may occur at the wobbly loci.  \end{rem}

\subsubsection{Meromorphic continuation}

Let us finally note another property of the inverse transformation $\mathsf{S}_{\bm{j},i}^{-1}(\phi_{\chi})$
that will become important  below. 
Considering the dependence of the  wave-functions $\Psi_{\chi,i}$ with respect to the parameters
$j_1,\dots,j_n$, one may note that this dependence is meromorphic. In order to see this, one may 
first note that the dependence of $\chi$ on $j_1,\dots,j_n$ is analytic, as $\chi$ is the solution to a 
system of differential equations having coefficients depending analytically on $j_1,\dots,j_n$.
The regular distribution 
associated to the function $\widetilde{\Psi}_{\bm{E}}(\bm{k})$
defined by inverting the relation \rf{tildePsi-to-Phi} also depends 
analytically on $j_1,\dots,j_n$. The inverse Fourier transformation $\mathsf{F}_{\bm{j}, i}^{-1}$ is meromorphic in 
$J$, as can be seen by an argument very similar to the one in \cite[Section 2]{GS}.

We may therefore consider the meromorphic continuation of wave-functions $\Psi_{\chi,i}$ away from the 
range $j_r\in -\frac{1}{2}+\mathrm{i}\BR$ in which the unitarity of the SoV-transformation holds.

\section{Outline of the proof of the analytic Langlands correspondence}


Theorem \ref{thm-RealLang} will follow from the following two results. 
\begin{itemize}
\item[i)] ({\it Separation of variables})
Any eigenstate $\Psi_\chi\in\CH_i$ 
of $\SH_w$ defines an element $\phi_\chi=\mathsf{S}_{\bm{j},i}\Psi_\chi$ of $\CH_{\rm SoV}^{}$. 
The wave-function $\phi_\chi(\bm{y})$ 
representing $\phi_\chi$ has the form 
\begin{equation}\label{Phi-chi}
\phi_\chi(\bm{y})=\prod_{k=1}^{n-3}\chi(y_k,\bar{y}_k),
\end{equation}
where $\chi(y,\bar{y})$ satisfies
\[
(\pa_{y}^2+t_\chi(y))\chi(y,\bar{y})=0, \qquad (\bar{\pa}_{\bar{y}}^2+\bar{t}_\chi(\bar{y}))
\chi({y},\bar{y})=0.
\]
Single-valuedness of $\chi$ implies that 
$\pa_{y}^2+t_\chi(y)$ must be a real oper. 
\item[ii)] ({\it Hecke eigenvalue property})
To each real oper $\pa_w^2+t(w)$ we may associate the single-valued solution $\chi$, unique up to scaling, of the pair of differential equations
\begin{equation}
(\pa_w^2+t(w))\chi(y,\bar{y})=0, \qquad (\bar{\pa}_{\bar{w}}^2+\bar{t}(\bar{w}))\chi(y,\bar{y})=0. 
\end{equation}
The functions $\mathsf{S}_{\bm{j},i}^{-1}\phi_{\chi}$ associated to $\chi$,
with $\phi_\chi$ defined in \rf{Phi-chi},
are
eigenstates of $\SQ_{w}$ with eigenvalue $\chi(w,\bar{w})$.
\end{itemize}

In order to prove these two results, we shall consider the 
SoV transformations $\mathsf{S}_{\bm{j}',i}^{(n+1)}$, associated to $C_{0,n+1}=\mathbb{P}^1\setminus\{z_0,\dots,z_{n}\}$,
$\bm{j'}=(j_0,j_1,\dots,j_n)$. 
The definition of $\mathsf{S}_{\bm{j}',i}^{(n+1)}$ is completely 
analogous to the definition of $\mathsf{S}_{\bm{j},i}^{(n)}:=\mathsf{S}_{\bm{j},i}^{}$ given in the previous section, 
assigning index $0$ to all variables associated to the additional puncture $z_0$, and admitting the factorisation 
$\mathsf{S}_{\bm{j}',i}^{(n+1)}=\mathsf{SoV}^{(n+1)}\circ \mathsf{F}_{\bm{j},i}^{(n+1)}$.
In the special case $j_0=-1$, we will exhibit relations between $\mathsf{S}_{\bm{j}',i}^{(n+1)}$ and 
the composition of $\mathsf{S}_{\bm{j},i}^{(n)}$ with the Hecke operators, implying the 
results i), ii) formulated above.

\subsection{SoV-transform of eigenstates}

The interplay between SoV-transform and Hecke operators will prove one direction of the analytic Langlands
correspondence rather easily.

\begin{propn}
The SoV-transform $\phi_\chi=\mathsf{S}_{\bm{j},i}^{(n)}\Psi_\chi$ of an eigenstate $\Psi_\chi$ can be represented in terms of the eigenvalue $\chi(z_0,\bar{z}_0)$ of the Baxter operators $\mathsf{H}_{z_0}$ as
\[
\phi_\chi(\bm{y})=\prod_{k=1}^{n-3}\chi(y_k,\bar{y}_k).
\]
\end{propn}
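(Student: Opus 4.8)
The key idea is to exploit the factorisation $\mathsf{Q}_{z_0}=\mathsf{H}_\infty\cdot\mathbf{H}_{z_0}$ together with the observation, central to this paper, that a Hecke modification at $z_0$ is implemented by passing to the Riemann surface $C_{0,n+1}=\mathbb{P}^1\setminus\{z_0,\dots,z_n\}$ with the extra spin $j_0=-1$. Concretely, I would proceed as follows. First, observe that since $\Psi_\chi$ is an eigenstate of all the Baxter operators $\mathsf{Q}_y$, it is in particular an eigenstate of $\mathsf{Q}_{z_0}$ with eigenvalue $\chi(z_0,\bar z_0)$; combined with $(\pa_{z_0}^2+\mathsf{T}(z_0))\mathsf{Q}_w=0$ this means the function $\chi(y,\bar y)$ of the eigenvalue is precisely the single-valued solution of the oper equation $(\pa_y^2+t_\chi(y))\chi=0$ with $t_\chi$ built from the eigenvalues $E_r$ of $\mathsf{H}_r$ on $\Psi_\chi$ via \rf{t-exp-E}. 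So the right-hand side $\prod_{k=1}^{n-3}\chi(y_k,\bar y_k)$ is exactly $\phi_\chi(\bm y)$ in the notation of Section 3; what must be shown is that $\mathsf{S}_{\bm j,i}^{(n)}\Psi_\chi=\phi_\chi$.

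Second, I would use Proposition \ref{SoV-propn} in the direct direction: $\mathsf{S}_{\bm j,i}^{(n)}$ intertwines the eigenvalue equations $\mathsf{H}_r\Psi_\chi=E_r\Psi_\chi$ with the decoupled system $(\pa_{y_k}^2+t_\chi(y_k))\Phi_\chi(\bm y)=0$, $k=1,\dots,n-3$, together with the complex-conjugate equations in $\bar y_k$. Hence $\Phi_\chi=\mathsf{S}_{\bm j,i}^{(n)}\Psi_\chi$ lies in the (finite-dimensional) space of solutions of this holonomic system; generically this solution space is spanned by products $\prod_k \chi_{a_k}(y_k,\bar y_k)$ over a basis $\{\chi_a\}$ of the two-dimensional solution space of the single ODE $(\pa_y^2+t_\chi(y))(\pa_{\bar y}^2+\bar t_\chi(\bar y))\chi=0$ that are symmetric under permutations of the $y_k$ (which is forced, since the $y_k$ are unordered branch points). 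So a priori $\Phi_\chi$ is a symmetric polynomial of degree one in each variable in the $\chi_a$'s. To pin down the coefficients and conclude $\Phi_\chi=\prod_k\chi(y_k,\bar y_k)$ for the single-valued $\chi$, I would invoke two constraints: (a) $\Phi_\chi\in\CH_{\rm Skl}$ must be square-integrable against $d\mu_{\rm Skl}$, which by the singularity analysis \rf{sing-chi} and the structure of the Sklyanin measure near $y_k=z_r$ selects the single-valued combination and kills the others (any monodromic solution entering even one factor would destroy either single-valuedness on $\BC^{n-3}$ or integrability at the punctures/infinity); and (b) the homogeneity/translation-invariance of $\Psi_\chi$ together with the behaviour at $y_k\to\infty$ fixes the overall normalisation and rules out cross terms mixing $\chi$ with a second, linearly independent solution.

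Third, I would complete this by the ``$C_{0,n+1}$'' argument sketched in the outline: compute $\mathsf{S}_{\bm j',i}^{(n+1)}$ at $j_0=-1$ applied to $\mathbf{H}_{z_0}\Psi_\chi$ and show it equals $\prod_{k=1}^{n-2}\chi(y_k,\bar y_k)$ by the same intertwining property one dimension up (now $C_{0,n+1}$ has $n-2$ branch points), using that $\mathbf{H}_{z_0}\Psi_\chi=\chi(z_0,\bar z_0)\,\mathsf{H}_\infty^{-1}\Psi_\chi$ up to the explicit prefactors in \rf{Heckedef}; then relate $\mathsf{S}_{\bm j',i}^{(n+1)}$ at $j_0=-1$ back to $\mathsf{S}_{\bm j,i}^{(n)}$ by a residue/degeneration computation in which the extra branch point is frozen, recovering the claimed formula for $\Phi_\chi$. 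The meromorphic-continuation remark at the end of Section 3 is what licenses setting $j_0=-1$, outside the unitarity range $j_0\in-\tfrac12+\mathrm{i}\BR$.

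**Main obstacle.** The hard part will be step three: establishing precisely how $\mathsf{S}_{\bm j',i}^{(n+1)}|_{j_0=-1}$ degenerates onto $\mathsf{S}_{\bm j,i}^{(n)}$ composed with the Hecke operator — i.e.\ controlling the extra integration over $y_{n-2}$ (or equivalently the extra $k_0$ variable) and showing the relevant distribution localises so as to reproduce exactly the prefactors $\prod_r|x_r-x|^{4j_r}/|z_r-z|^{2j_r}$ in \rf{Heckedef}. This is the same technical core the introduction flags as ``the main technical issue''; everything else (the intertwining, the integrability and single-valuedness selection) is comparatively routine given Proposition \ref{SoV-propn}, Lemma \ref{SOV-unitarity}, and the singularity estimate \rf{sing-chi}.
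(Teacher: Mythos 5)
Your proposal does not follow the paper's route, and it misplaces the technical difficulty. The paper's proof of this proposition is short and rests on two observations you do not isolate: (i) in Fourier space the integration over $x_0$ that turns the Hecke modification $\mathbb{H}_{z_0,x_0}$ into the Hecke operator becomes evaluation at $k_0=0$, i.e.\ $(\mathsf{F}^{(n)}_{\bm{j},i}\mathsf{H}_{z_0}\Psi)(\bm{k})=\lim_{k_0\ra 0}(\mathsf{F}^{(n+1)}_{\bm{j}',i}\mathbb{H}_{z_0,x_0}\Psi)(\bm{k}')$, the prefactors in \rf{Heckedef} being exactly absorbed by the twist defining $\mathsf{F}^{(n+1)}_{\bm{j}',i}$; and (ii) under the $(n+1)$-point SoV change of variables, $k_0'\ra 0$ corresponds to $y_0\ra z_0$, so that $\lim_{y_0\ra z_0}\mathsf{S}^{(n+1)}_{\bm{j}',i}\big(\mathbb{H}_{z_0,x_0}\Psi_\chi\big)=\chi(z_0,\bar{z}_0)\,\mathsf{S}^{(n)}_{\bm{j},i}\Psi_\chi$. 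The manifest permutation symmetry of the $(n+1)$-point SoV transform in $(y_0,y_1,\dots,y_{n-3})$ then forces the product form, with the factors normalised by the Baxter eigenvalue function itself. No residue/degeneration computation, no localisation of distributions, and no analysis of the prefactors is needed here; the hard degeneration you flag as the ``main obstacle'' is the content of Theorem \ref{Heckemod-SOV} (the converse direction, proved via the $\Theta^{(n,i)}_\chi$ integrals), not of this proposition.

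Your second step is a genuinely different and in principle viable route (intertwining via Proposition \ref{SoV-propn} plus rigidity of single-valued solutions), but as written it has two gaps. First, the selection of the single-valued factor is a monodromy-invariance statement --- $\Phi_\chi$ is a function on the complement of the singular divisors, so continuation of each $y_k$ around $z_r$ must act trivially, which for irreducible real monodromy singles out $\chi$ up to scale in each tensor factor --- and not a square-integrability statement: for $j_r\in-\frac{1}{2}+\mathrm{i}\BR$ the two local exponents in \rf{sing-chi} have equal modulus $|y-z_r|$, so integrability against $d\mu_{\rm Skl}^{}$ does not distinguish them. Second, and more importantly, this route only yields $\Phi_\chi=C\prod_k\chi(y_k,\bar{y}_k)$ for an undetermined constant $C$, and your appeal to homogeneity and large-$y_k$ behaviour does not visibly tie that constant to the normalisation of $\chi$ as the Hecke eigenvalue, which is precisely what the proposition asserts; the paper's symmetry argument supplies exactly this link. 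Your third step, finally, is circular where it invokes ``the same intertwining property one dimension up'' to obtain the product form of $\mathsf{S}^{(n+1)}_{\bm{j}',i}\mathbf{H}_{z_0}\Psi_\chi$: the product form is what is being proved, and on $C_{0,n+1}$ it is obtained from the symmetry and the $y_0\ra z_0$ limit, not from the intertwining relation alone.
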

\begin{proof}
Let us note that 
\begin{equation}\label{Hecke-Fourier}
(\mathsf{F}^{(n)}_{\bm{j},i}\SH_{z_0}\Psi)(\bm{k})=
\lim_{k_0\ra 0}(\mathsf{F}^{(n+1)}_{\bm{j}',i}\Psi')(\bm{k}'),\qquad 
\Psi'(\bm{x}'):=\mathbb{H}_{z_0,x_0}\Psi(\bm{x}),
\end{equation}
using the notations $\bm{x}'=(x_0,\dots,x_{n-1})$ and $\bm{k}'=(k_0,\dots,k_{n-1})$.
When $\Psi\equiv\Psi_\chi$ is an eigenstate of $\SH_{z_0}$ with eigenvalue 
$\chi(z_0,\bar{z}_0)$, we may conclude from \rf{Hecke-Fourier} that
\begin{equation}\label{Hecke-eigen-k}
\chi(z_0,\bar{z}_0)(\mathsf{F}^{(n)}_{\bm{j},i}\Psi_\chi)(\bm{k})
=\lim_{k_0\ra 0}(\mathsf{FT}^{(n+1)}_{\bm{j}',i}\Psi'_\chi)(\bm{k}').
\end{equation}
The change of variables $\mathsf{SoV}^{(n+1)}$,  explicitly given by
\begin{equation}\label{SoV-(n+1)}
k_0'(\bm{y}')=y\,(z_0-y_0)\frac{\prod_{k=1}^{n-3}(z_0-y_k)}{\prod_{r=1}^{n-1}(z_0-z_s)},
\quad
k_r'(\bm{y}')=y\,\frac{z_r-y_0}{z_r-z_0}k_r(\bm{y}),\quad r=1,\dots,n.
\end{equation}
can be used to define the SoV transform $
\phi'=\mathsf{S}_{\bm{j}',i}^{(n+1)}\Psi'_\chi
$
of the function $\Psi'_\chi$ as before.
The function  
$
\phi'(y_0,\dots,y_{n-3})$
is manifestly symmetric in $y_0,\dots,y_{n-3}$. 

Noting that 
$y_0\ra z_0$ implies that $k_0'(\bm{y}')\ra 0$ and $k_r'(\bm{y}')\ra k_r(\bm{y})$, it follows from
(\ref{Hecke-eigen-k}) that
\begin{equation}\label{limy0}
\lim_{y_0\ra z_0}
\big(\mathsf{S}^{(n+1)}_{\bm{j}',i}\Psi'_\chi\big)(\bm{y}')=\chi(z_0,\bar{z}_0)
\big(\mathsf{S}_{\bm{j},i}^{(n)}\Psi_\chi\big)(\bm{y}).
\end{equation}
As the function on the left side of \rf{limy0} is 
symmetric under permutations of the arguments exchanging $z_0$
with $y_k$ for any $k=1,\dots,n-3$, it follows that
\[
\lim_{y_0\ra z_0}\big(\mathsf{S}^{(n+1)}_{\bm{j}',i}\Psi'_\chi\big)(\bm{y}')=\chi(z_0,
\bar{z}_0)\prod_{k=1}^{n-3}
\chi(y_k,\bar{y}_k).
\]
The claim follows immediately. 
\end{proof}
As the SoV-transform can be inverted, we see that eigenstates are uniquely determined by their eigenvalues, 
which means that the spectrum is simple.

\subsection{Hecke eigenvalue property}

We have seen that eigenstates define real opers as their Hecke eigenvalues. 
It remains to prove that all real opers define eigenstates. To this aim, we shall consider inverse SoV transformations of the form $\big(\mathsf{S}_{\bm{j}',i}^{(n+1)}\big)^{-1}\phi'_\chi$,
where
\begin{equation}
\phi'_\chi(\bm{y}')=\chi(y_0,\bar{y}_0)
\phi_\chi(\bm{y}),\quad
\phi_\chi(\bm{y})=\prod_{k=1}^{n-3}\chi(y_k,\bar{y}_k),\quad 
\begin{aligned} &\bm{y}=(y_1,\dots,y_{n-3}),\\
& \bm{y}'=(y_0,y_1\dots,y_{n-3}),
\end{aligned}
\end{equation}
where $\chi(y,\bar{y})$ is the solution to the differential equation associated to a real oper on
$C_{0,n}$ rather than $C_{0,n+1}$.  Opers on
$C_{0,n+1}$ are differential operators of the form $\pa_y^2+t^{(n+1)}(y)$, with 
\begin{equation}
t^{(n+1)}(y)=\sum_{r=0}^{n}\bigg(\frac{j_r(j_r+1)}{(y-z_r)^2}+\frac{E_r}{y-z_r}\bigg).
\end{equation}
We see that opers on $C_{0,n+1}$ represent opers on $C_{0,n}$ if $j_0=-1$ and $E_0=0$.

The following theorem  expresses a crucial link between the inverse Separation of Variables transformation and 
the Hecke modifications. 
\begin{thm}\label{Heckemod-SOV}
The Hecke modification of the states $\big(\mathsf{S}_{\bm{j},1}^{(n)}\big)^{-1}\phi_\chi\in\CH_1$ can be represented as 
\begin{equation}\label{HeckevsSoV(n+1)}
\mathbb{H}_{z_0,x_0}\big(\mathsf{S}_{\bm{j},0}^{(n)}\big)^{-1}\phi_\chi
=\frac{1}{\chi_n^{}}\big(\mathsf{S}_{\bm{j}',1}^{(n+1)}\big)^{-1}\phi'_\chi,\qquad 
\chi_n^{}:=\frac{\Ga(-2j_n)}{\Ga(2j_n+1)}\,C_{-}^{(n)}(\bm{j}), 
\end{equation}
referring to \rf{sing-chi} for the definition of $C_{-}^{(n)}(\bm{j})$.
\end{thm}
This theorem is proven in Section \ref{Heckemod-SOV-pf}. It will be the key to one of our main results:

\begin{thm}\label{Heckeop-SOV}
The states $\psi_{\chi}=\big(\mathsf{S}_{\bm{j},i}^{(n)}\big)^{-1}\phi_\chi\in\CH_1$ are eigenstates of the operators $\mathbf{Q}_{w}$,
\begin{equation}\label{Hecke-eigen}
\mathbf{Q}_{w}\,\psi_\chi =
\frac{1}{\chi_n^{}}\chi(w,\bar{w})\,\psi_\chi.
\end{equation}
\end{thm}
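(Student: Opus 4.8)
The plan is to deduce Theorem \ref{Heckeop-SOV} from Theorem \ref{Heckemod-SOV} by "integrating out" the extra puncture $z_0$. Recall that $\SQ_w = \mathbf{H}_\infty \cdot \mathbf{H}_w$ and that $\mathbf{H}_w \Psi = \int_{\BC} d^2 x_0\, \mathbb{H}_{w,x_0}\Psi$. So the idea is to apply $\int_{\BC}d^2x_0$ to both sides of \rf{HeckevsSoV(n+1)}, with $z_0$ renamed to $w$. On the right-hand side the $x_0$-integral of $\mathbb{H}_{w,x_0}(\mathsf{S}_{\bm{j},1}^{(n)})^{-1}\phi_\chi$ produces exactly $\mathbf{H}_w\,\psi_\chi$ (up to the explicit scalar $\chi_n\,\pi\Ga(-2j_n)/\Ga(2j_n+1)$), and then composing with $\mathbf{H}_\infty$ gives $\SQ_w\,\psi_\chi$. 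The task is therefore to show that the $x_0$-integral of the left-hand side, $\int_{\BC}d^2x_0\,(\mathsf{S}_{\bm{j}',0}^{(n+1)})^{-1}\phi'_\chi$, reproduces $\psi_\chi$ itself times $\chi(w,\bar w)$ and an inverse normalisation factor, so that the $\Gamma$-factors and $\chi_n$ cancel to leave the clean eigenvalue $\chi(w,\bar w)/\chi_n$.

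The key step is thus to identify $\int_{\BC}d^2x_0\,(\mathsf{S}_{\bm{j}',0}^{(n+1)})^{-1}\phi'_\chi$. I would argue as follows. By the factorisation $\mathsf{S}_{\bm{j}',0}^{(n+1)}=\mathsf{SoV}^{(n+1)}\circ\mathsf{F}_{\bm{j}',0}^{(n+1)}$, the inverse is $(\mathsf{F}_{\bm{j}',0}^{(n+1)})^{-1}\circ(\mathsf{SoV}^{(n+1)})^{-1}$, and $(\mathsf{SoV}^{(n+1)})^{-1}\phi'_\chi$ produces a homogeneous function $\widetilde\Psi'(\bm{k}')$ in $n-1$ variables $k_0,\dots,k_{n-1}$. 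Integrating the resulting $\Psi'(\bm{x}')$ over $x_0$ is, on the Fourier side, the operation of setting $k_0\to 0$ (the dual of integration against the constant function), which is precisely the operation appearing in \rf{Hecke-Fourier}. Concretely, one checks that the change of variables \rf{SoV-(n+1)} degenerates as $y_0\to z_0$ — it is exactly there that $k_0'(\bm{y}')\to 0$ and $k_r'(\bm{y}')\to k_r(\bm{y})$ — so that the $k_0\to 0$ limit of $\widetilde{\Psi}'$ is governed by the residue of $\phi'_\chi$ along $y_0 = z_0$, which by the construction of $\phi'_\chi$ contributes the single factor $\chi(z_0,\bar z_0) = \chi(w,\bar w)$ while the remaining $n-3$ factors $\prod_{k\geq 1}\chi(y_k,\bar y_k)$ assemble back into $\phi_\chi(\bm{y})$, i.e. into $\psi_\chi$ after applying $(\mathsf{S}_{\bm{j},1}^{(n)})^{-1}$. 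The delicate point here is bookkeeping of the Jacobian of \rf{SoV-(n+1)} versus \rf{ch-of-var-SoV} near the degeneration locus and of the twisting prefactors in $\mathsf{F}_{\bm{j}',0}^{(n+1)}$ specialised at $j_0=-1$; these must combine with the asymptotic behaviour \rf{sing-chi} of $\chi$ near $y_0=z_0$ — with exponents $1+\ep(2j_0+1)=1\mp1$ for $j_0=-1$ — to yield precisely the inverse of the scalar $\pi\Ga(-2j_n)/\Ga(2j_n+1)$ appearing in \rf{HeckevsSoV(n+1)}, so that the net eigenvalue is $\chi(w,\bar w)/\chi_n$ as claimed. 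Alternatively, and perhaps more cleanly, one may compare \rf{HeckevsSoV(n+1)} directly with the relation \rf{limy0}, which already established $\lim_{y_0\to z_0}(\mathsf{S}^{(n+1)}_{\bm{j}',i}\Psi'_\chi)(\bm{y}')=\chi(z_0,\bar z_0)\phi_\chi(\bm{y})$ for honest eigenstates; feeding \rf{HeckevsSoV(n+1)} into this dual statement turns the Hecke-modification identity into the Hecke-operator eigenvalue equation after the $x_0$-integration.

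I would organise the proof as: (1) write $\SQ_w\psi_\chi = \mathbf{H}_\infty \mathbf{H}_w \psi_\chi = \mathbf{H}_\infty \int_{\BC}d^2x_0\,\mathbb{H}_{w,x_0}\psi_\chi$; (2) substitute \rf{HeckevsSoV(n+1)} (with $z_0$ playing the role of the spectral parameter $w$, and noting $j_0=-1$, $E_0=0$ realises the oper on $C_{0,n}$ inside $C_{0,n+1}$) to rewrite $\mathbb{H}_{w,x_0}\psi_\chi$ in terms of $(\mathsf{S}_{\bm{j}',0}^{(n+1)})^{-1}\phi'_\chi$ up to the explicit constant; (3) carry out the $x_0$-integral on the $(\mathsf{S}_{\bm{j}',0}^{(n+1)})^{-1}$ side, using that on the Fourier side this is the $k_0\to 0$ specialisation and invoking the degeneration of \rf{SoV-(n+1)} at $y_0 = z_0$ together with \rf{limy0}, to get $\chi(w,\bar w)\,\psi_\chi$ times the inverse normalisation; (4) collect the constants, checking the $\Gamma$-factors cancel and leave $1/\chi_n$; (5) apply $\mathbf{H}_\infty$, which is the identity at the level of the $\psi$-representatives, to conclude \rf{Hecke-eigen}. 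The main obstacle I anticipate is step (3): making rigorous the claim that integrating the inverse-$\mathsf{SoV}$ output over the extra variable $x_0$ corresponds cleanly to the $k_0\to0$ limit on the Fourier side and extracts exactly one $\chi$-factor — this requires care because the inverse Fourier transform is only defined as a distribution, the map $\phi'_\chi\mapsto\Psi'$ is not pointwise, and the degeneration $y_0\to z_0$ must be controlled uniformly enough to commute with the $x_0$-integration; invoking the meromorphic continuation in $\bm j$ and the elliptic-regularity smoothness statements established above should suffice to justify the manipulations.
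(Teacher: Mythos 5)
Your overall strategy coincides with the paper's: Theorem \ref{Heckeop-SOV} is indeed deduced from Theorem \ref{Heckemod-SOV} by integrating the Hecke modification over $x_0$, i.e.\ by applying $\int_\BC d^2x_0$ to \rf{HeckevsSoV(n+1)} and identifying the right side with $\mathbf{H}_{z_0}\psi_\chi$ up to the explicit constant. The divergence is in your step (3), which you correctly single out as the crux but do not close. The paper does not evaluate $\int d^2x_0\,(\mathsf{S}^{(n+1)}_{\bm{j}',0})^{-1}\phi'_\chi$ by passing to the Fourier side and arguing that $k_0\to 0$ forces $y_0\to z_0$. Instead it writes $\int d^2x_0$ as the $j_0\to-1$ limit of the intertwining operator $\SR_{j_0}$ with kernel $|x_0-x_0'|^{-4j_0-4}$, and uses the reflection property (Proposition \ref{refl-propn}, a direct consequence of \rf{Fourier-reflection}) to trade $\SR_{j_0}\big(\mathsf{S}^{(n+1)}_{\bm{j}',i}\big)^{-1}$ for $\frac{\Gamma(-2j_0)}{\Gamma(2j_0+2)}\big(\mathsf{S}^{(n+1)}_{\bm{j}'',i}\big)^{-1}$ with reflected spin $-j_0-1\to 0$ at the new puncture. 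The extraction of the eigenvalue $\chi(z_0,\bar z_0)$ is then performed by the identity $\lim_{j\to-1}\frac{\Gamma(-2j-1)}{\Gamma(2j+2)}|y|^{2j}=\pi\,\de^2(y)$, which localizes the $x_0$-dependence and collapses $\Sigma^{(n+1,1)}_\chi$ to $\chi(z_0,\bar z_0)\,\Sigma^{(n,1)}_\chi$. This is precisely the regularisation your approach lacks: keeping $j_0$ generic and reflecting it sidesteps the question of whether the homogeneous distribution $\widetilde{\Psi}'$ admits a pointwise restriction to $k_0=0$ (the paper's own computations, e.g.\ the proof of Proposition \ref{Thetan0lim}, show that such restrictions generically produce additional distributional terms supported on the degeneration locus, so your ``evaluate at $k_0=0$'' step cannot be taken for granted).

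A second, more concrete problem is your bookkeeping in steps (4)--(5). The $x_0$-integration does not by itself cancel the $\Gamma$-factors: the paper obtains $\mathsf{H}_{z_0}\Sigma^{(n,0)}_\chi=\frac{\Ga(2j_n+1)}{\Ga(-2j_n)}\frac{\chi(z_0,\bar z_0)}{\chi_n}\Sigma^{(n,1)}_\chi$, with the ratio of $\Gamma$-functions still present. It disappears only because the operator completing $\mathbf{H}_{z_0}$ to $\mathsf{Q}_{z_0}$ is normalised via $\mathsf{H}_n:=\lim_{z_0\to z_n}|z_r-z_0|^{2j_r}\mathsf{H}_{z_0}=\frac{\Ga(2j_n+1)}{\Ga(-2j_n)}\mathsf{H}_\infty$, so that $\mathsf{Q}_{z_0}=\mathsf{H}_{z_0}(\mathsf{H}_n)^{-1}$ supplies exactly the inverse $\Gamma$-ratio. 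Treating $\mathbf{H}_\infty$ as ``the identity on representatives'' at that stage, as you propose, leaves a spurious factor $\frac{\Ga(2j_n+1)}{\Ga(-2j_n)}$ in the eigenvalue. Neither issue invalidates your plan outright, but both must be repaired, and the reflection-operator mechanism is the tool the paper uses to do so.
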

The proof will be given in Section \ref{Heckeop-SOV-pf}.

\section{Hecke modifications of the inverse SoV transformation}\label{Heckemod-SOV-pf}

We shall here prove the crucial Theorem \ref{Heckemod-SOV}. 
We will consider the inverse $\mathsf{SoV}$-transformations associated to Riemann spheres $C_{0,n+1}$ obtained from 
$C_{0,n}$ by adding another marked point at $z=z_0$. When the parameter $j_0$ associated to $z_0$ is chosen
as $j_0=-1$, we will find a relation with the Hecke modification  $\mathbb{H}_{z_0,x_0}$.

We begin by introducing the relevant notations. 
A main ingredient of the integrand
will be a single-valued function 
$\chi_{}(y,\bar{y})$ satisfying 
\begin{equation}\label{chiopdiff}
\begin{aligned}
&(\pa_y^2+t(y))\chi_{}(y,\bar{y})=0,\\ 
&(\pa_{\bar{y}}^2+\bar{t}(\bar{y}))\chi_{}(y,\bar{y})=0,
\end{aligned}\qquad
t(y)=\sum_{r=1}^{n-1}\bigg(\frac{\de_r}{(y-z_r)^2}+\frac{E_r}{y-z_r}\bigg).
\end{equation}
in both cases. We shall consider the following integrals
\begin{align}\label{Fourier(n+1)}
\Sigma^{(n+1,1)}_{\chi}\Big(\begin{smallmatrix} j_0 & j_1 & \dots & j_{n-1}\\[.75ex]
x_0 & x_1 & \dots &x_{n-1}\end{smallmatrix};j_{n}\Big)=\int\prod_{r=0}^{n-1}
\frac{d^2k_r\,e^{2\mathrm{i}\,\mathrm{Im}(k_rx_r)}}{\pi\,|k_r|^{2j_r+2}}&
\pi\de^{(2)}\big({\textstyle \sum_{r=0}^{n-1}k_r}\big)\\[-1.5ex] 
&\times|\hat{\rho}(\bm{k})|^{-2j_{n}}
\prod_{k=0}^{n-3}\chi_{}^{}\big(y_k(\bm{k}),\bar{y}_k(\bm{k})\big), 
\notag\end{align}
with $y_0(\bm{k}),\dots,y_{n-3}(\bm{k})$ and $\hat{\rho}(\bm{k})$, $\bm{k}=(k_0,\dots,k_{n-1})$, defined by 
\begin{equation}\label{y-to-k}
k_r=\hat{\rho}_i(\bm{k})
\frac{\prod_{k=0}^{n-3}(z_r-y_k(\bm{k}))}{\prod_{s\neq r}(z_r-z_s)}, \qquad r=0,\dots,n-1,\qquad\hat{\rho}(\bm{k})=
\sum_{r=0}^{n-1}z_rk_r.
\end{equation}

Theorem \ref{Heckemod-SOV} is equivalent to the following integral  identity:
\begin{equation}\label{Heckemod-SOV-repn}
\begin{aligned}
&\Sigma^{(n+1,1)}_{\chi}\Big(\begin{smallmatrix} -1 & j_1 & \dots & j_{n-1}\\[.75ex]
x_0 & x_1 & \dots &x_{n-1}\end{smallmatrix};j_n\Big)=
\,\chi_n^{}\,\prod_{r=1}^{n-1}\frac{|x_r-x_0|^{4j_r}}{|z_r-z_0|^{2j_r}}\,
\Sigma^{(n,0)}_{{\chi}}\Big(\begin{smallmatrix} j_1 & \dots & j_{n-1}\\[.25ex]
-\frac{z_1-z_0}{x_1-x_0} & \dots &-\frac{z_{n-1}-z_0}{x_{n-1}-x_0}\end{smallmatrix};j_n\Big),
\end{aligned}
\end{equation}
with function $\Sigma^{(n,0)}_{{\chi}}=\big(\mathsf{S}_{\bm{j},0}^{(n)}\big)^{-1}\phi_\chi$ on the right side 
of \rf{Heckemod-SOV-repn} being represented by an integral obtained from \rf{Fourier(n+1)}
by restricting the ranges for $r$ and $k$ to $\{1,\dots,n-1\}$ and $\{1,\dots,n-3\}$, respectively, and
replacing $j_n$ by $-j_n-1$.
In order to prove \rf{Heckemod-SOV-repn} we will first derive a differential equations for the function 
$\Sigma^{(n+1,1)}$ on the left side of \rf{Heckemod-SOV-repn}, then note that the right side of \rf{Heckemod-SOV-repn}
satisfies the same equations, and finally establish the equality of the limits $z_0\ra 0$, 
followed by $x_0\ra 0$.

\subsection{Differential equation}

It will be crucial to observe that one of the eigenvalue equations satisfied by $\Theta^{(n+1,1)}_\chi$ simplifies
to a first order differential equation when $j_0=-1$, and $E_0=0$.
 \begin{propn}\label{Nulleqpropn}
 When $j_0=-1$ and $E_0=0$, the distribution $\Theta^{(n+1,1)}_\chi$ satisfies 
 \begin{equation}\label{nulleqn}
\mathcal{N}\Theta^{(n+1,1)}_\chi=0, \qquad \mathcal{N}=\sum_{r=1}^{n}\frac{1}{z_0-z_r}\bigg((x_r-x_0)^2\frac{\partial}{\partial x_r}-2j_r(x_r-x_0)\bigg).
\end{equation}
\end{propn}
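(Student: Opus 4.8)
The claim has two parts: the dilatation/translation/special-conformal equations $\CD_a\Theta^{(n+1,1)}_\chi=0$ for $a\in\{-,0,+\}$, and the new equation $\CN\Theta^{(n+1,1)}_\chi=0$. The first part follows directly from Lemma \ref{DiffeqTheta}: for general $j_0$ the function $\Theta^{(n+1,1)}_\chi$ satisfies $\CD^a\Theta^{(n+1,1)}_\chi=0$ for $a\in\{0,+\}$ together with $\widehat\CD^+\Theta^{(n+1,1)}_\chi=0$; the extra input when $j_0=-1$ is that $\CD^-$ is now also annihilated. The most economical way to see this is to exploit the relation \rf{znxnlim-1-sp}: with $j_n$ replaced by $j_0=-1$ in the role of the ``last'' variable, the $z_0\ra\infty$ limit of $\Theta^{(n+1,1)}_\chi$ is $\Theta^{(n,0)}_\chi$, which does satisfy $\CD^-\Theta^{(n,0)}_\chi=0$ by Lemma \ref{DiffeqTheta}; but here I instead want the statement before taking the limit, so I would argue directly on the Fourier integral \rf{Fourier(n)} (with $n\mapsto n+1$, $i=1$), where $j_0=-1$ makes the weight $|k_0|^{-2j_0-2}=|k_0|^0=1$ trivial. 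The operators $\CD^a$ act on $x_r$, hence by Fourier duality on $k_r$; I would transcribe each $\CD^a$ into its action on the $k$-integrand and check annihilation, the point being that at $j_0=-1$ the $k_0$-dependence of the integrand (which enters only through the $y_k(\bm k)$ and $\hat\rho_1$) is precisely of the form that the $\CD^-$-image is a total $k_0$-derivative, or vanishes on the support of the delta-distributions $\de^{(2)}(\sum z_r k_r)\,\de^{(2)}(\sum k_r)$.

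\textbf{The new equation.} For $\CN\Theta^{(n+1,1)}_\chi=0$ I would proceed as in the proof of Lemma \ref{DiffeqTheta}: realise $\CN$ as (a residue or a coefficient in an expansion of) a generating operator $\mathsf T(z)$, whose Fourier transform is given by \rf{tildeTdef}--\rf{abcdef}, and then use that on the SoV side $\widetilde{\mathsf T}(y_k)=-\pa^2_{y_k}$ (Proposition \ref{SoV-propn}). Concretely, $\CN$ is built from the ``lowering'' differential operators $\CD_r^-=2j_r x_r-x_r^2\pa_{x_r}$ shifted by $x_0$, weighted by $1/(z_0-z_r)$; this is exactly the combination that appears in the Higgs field / stress tensor evaluated against the Hecke direction at $z_0$. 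The key observation is that when $j_0=-1$ and $E_0=0$ the oper $t^{(n+1)}(y)$ has no singularity at $z_0$ at all — the $\de_0/(y-z_0)^2$ term vanishes since $\de_0=j_0(j_0+1)=0$, and the residue $E_0$ vanishes by hypothesis — so $\chi$ is regular at $z_0$, and the coefficient of the appropriate power of $(y-z_0)$ in the oper equation $(\pa_y^2+t^{(n+1)}(y))\chi=0$ is an honest relation with no pole, which Fourier-transforms into $\CN\Theta^{(n+1,1)}_\chi=0$. I would make this precise by expanding $\widetilde{\mathsf T}_{n+1}(z)$ near $z=z_0$ the way \rf{sfT-infty} expands it near $z=\infty$: the $(z-z_0)^{-2}$ coefficient gives $\de_0=0$ automatically, the $(z-z_0)^{-1}$ coefficient is $\SH_{z_0}$-type and gives $E_0=0$, and it is the next, \emph{regular} coefficient that, combined with the vanishing of the two singular ones and with $\mathsf a(y_k)=-\pa_{y_k}$, $\mathsf a^2(y_k)=\pa^2_{y_k}+\mathsf a'(y_k)$ from \rf{key-SOV}--\rf{secondkey-SOV}, collapses to $\mathsf b_{z_0}\widetilde\Theta^{(n+1,1)}_\chi=0$; transcribing $\mathsf b_{z_0}$ back through the Fourier transform yields exactly $\CN$ acting on $\Theta^{(n+1,1)}_\chi$.

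\textbf{Main obstacle.} The routine-but-delicate part is the bookkeeping of the change of variables \rf{ch-of-var-SoV} / \rf{y-to-k} at the additional puncture $z_0$: one must verify that the $y_k(\bm k)$ and $\hat\rho_1(\bm k)$ defined with $n+1$ punctures genuinely intertwine $\mathsf a(z)$ with $-\pa_z$ \emph{including} the $r=0$ term, so that Proposition \ref{SoV-propn} applies verbatim, and then that setting $j_0=-1$, $E_0=0$ is compatible with the distributional identities (the integrand in \rf{Fourier(n)} involves $|k_0|^{-2j_0-2}=1$, and one needs the $k_0$-integration, constrained by the two delta-distributions, to be well-defined and to commute with applying $\CN$). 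I expect the genuinely non-formal point to be justifying that the naively-derived relation, which a priori holds only distributionally for $j_r\in-\tfrac12+\mathrm i\BR$, persists after the meromorphic continuation in $j_0$ down to $j_0=-1$; this is handled by the meromorphy statement already established for $\Sigma^{(n+1)}_{\hat\chi}$ in $j_{n+1}$ (here in the role of $j_0$), whose poles depend only on $j_0-\sum_{r\ge 1} j_r$, so that for generic $j_1,\dots,j_n$ the value $j_0=-1$ is a regular point and the limit $E_0\ra 0$ may be taken termwise.
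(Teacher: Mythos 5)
Your overall strategy for \rf{nulleqn} --- pass to Fourier transform, organise the Hamiltonians into the generating operator $\widetilde{\mathsf{T}}(z)$ of \rf{tildeTdef}, expand at $z=z_0$, and exploit $j_0=-1$ (so $\de_0=0$) together with $E_0=0$ --- is the same as the paper's. But you misplace the key step: you assign the new equation to the \emph{regular} $(z-z_0)^0$ coefficient of the expansion, after declaring that the $(z-z_0)^{-1}$ coefficient merely ``gives $E_0=0$''. The regular coefficient of $\widetilde{\mathsf{T}}(z)$ at $z=z_0$, applied to the eigenfunction, just reproduces the regular part of $t^{(n+1)}(y)$ at $y=z_0$, which is generically nonzero; it yields no vanishing condition and in particular does not produce $\CN$. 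The actual mechanism --- exactly parallel to the way $\mathsf{b}_\infty\widetilde\Theta=0$ is extracted from the $z^{-3}$ (i.e.\ the $E_{n+1}$-) coefficient in \rf{sfT-infty}, not from the $\CO(z^{-4})$ term --- is that the \emph{residue itself} factors at $j_0=-1$:
\begin{equation*}
\mathrm{Res}_{z=z_0}\,\widetilde{\mathsf{T}}(z)\;=\;-k_0\,\widetilde{\CN},\qquad
\widetilde{\CN}=\sum_{r}\frac{k_r}{z_0-z_r}\Big(\big(\tfrac{\pa}{\pa k_r}-\tfrac{\pa}{\pa k_0}\big)^2-\tfrac{\de_r}{k_r^2}\Big),
\end{equation*}
the overall factor $k_0$ being available precisely because $\de_0=j_0(j_0+1)=0$ removes the $\de_0/k_0$ term in $\mathsf{b}(z)$ and because at $j_0=-1$ the Fourier transforms of $\CD_0^0,\CD_0^\pm$ all carry $k_0$ on the left (e.g.\ $x_0\pa_{x_0}+1\mapsto -k_0\pa_{k_0}$). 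Hence $E_0=0$ is \emph{literally} the statement $k_0\widetilde{\CN}\widetilde\Theta^{(n+1,1)}_\chi=0$, from which \rf{nulleqn} follows by inverse Fourier transform. Without this factorisation of the residue your argument does not reach $\CN$.

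A secondary inaccuracy: the operator that appears is not ``$\mathsf{b}_{z_0}$'' (the residue of $\mathsf{b}(z)$ alone, which at $j_0=-1$ is just $-k_0\pa_{k_0}^2$); $\widetilde{\CN}$ receives cross-contributions from $\mathsf{a}^2(z)$ and from $\mathsf{c}(z)\mathsf{b}(z)$ mixing the pole at $z_0$ with the poles at $z_r$, $r\ge 1$, which is why it is a sum over $r$ containing $\pa_{k_r}\pa_{k_0}$ and $\pa_{k_0}^2$ terms. The remaining ingredients of your write-up (the $\CD^0,\CD^+$ equations from Lemma \ref{DiffeqTheta}, and the appeal to meromorphic continuation to justify $j_0=-1$) are consistent with the paper, which in fact does not spell out the $\CD^-$ equation in its proof either.
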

\begin{proof}
The condition $E_0=0$ implies that
\[
0=\,\Big(\underset{z=z_0}\Res \widetilde{\mathsf{T}}(z)\Big)\,\widetilde{\Theta}^{(n+1,1)}_\chi=-k_0\,\widetilde{\mathcal{N}}\,\widetilde{\Theta}^{(n+1,1)}_\chi,
\]
with $\widetilde{\Theta}^{(n+1,1)}_\chi$ being the Fourier transformation of ${\Theta}^{(n+1,1)}_\chi$, 
with   $\widetilde{\mathsf{T}}(z)$ defined in \rf{tildeTdef}, and 
\[
\widetilde{\mathcal{N}}:=\sum_{r=1}^{n-1}\frac{k_r}{z_0-z_r}\bigg(
\frac{\pa^2}{\pa k_r^2}-\frac{\de_r}{k_r^2}-2\frac{\pa^2}{\pa k_r\pa k_0}+\frac{\pa^2}{\pa k_0^2}\bigg).
\]
It follows that $\widetilde{\mathcal{N}}\widetilde{\Theta}^{(n+1,1)}_\chi=0$, implying \rf{nulleqn} by inverse Fourier transformation. 
\end{proof}

\begin{rem} Equation \rf{nulleqn} can be recognised as the differential equation characterising 
Hecke modifications of the conformal blocks of affine Lie algebras \cite{AFW}.
\end{rem}

It follows from the differential equations that there must exist a function $\Psi_\chi$ satisfying 
\rf{trans-dil-Psi} such that
\begin{equation}
\begin{aligned}
&\Sigma^{(n+1,1)}_{\chi}\Big(\begin{smallmatrix} -1 & j_1 & \dots & j_{n-1}\\[.75ex]
x_0 & x_1 & \dots &x_{n-1}\end{smallmatrix};j_n\Big)=
\prod_{r=1}^{n-1}\frac{|x_r-x_0|^{4j_r}}{|z_r-z_0|^{2j_r}}\,
\Psi_\chi\Big(\begin{smallmatrix} j_1 & \dots & j_{n-1}\\[.25ex]
-\frac{z_1-z_0}{x_1-x_0} & \dots &-\frac{z_{n-1}-z_0}{x_{n-1}-x_0}\end{smallmatrix};j_n\Big).
\end{aligned}
\end{equation}
In order to prove \rf{Heckemod-SOV-repn}, it remains to compare the limit $z_0\ra\infty$ followed by 
$x_0\ra\infty$ of both sides of equation \rf{Heckemod-SOV-repn}.

\subsection{Comparison of limits}

Note, on the one hand, that we have
 \[
  \begin{aligned}
\lim_{z_0\ra\infty}&|z_0|^{2j_n}  \prod_{r=1}^{n-1}{|z_r-z_0|^{-2j_r}}
\Psi_{\chi}\Big(\begin{smallmatrix} j_1 & \dots & j_n\\[.25ex]
-\frac{z_1-z_0}{x_1-x_0} & \dots &-\frac{z_{n-1}-z_0}{x_{n-1}-x_0}\end{smallmatrix};j_n\Big)\\
&=\lim_{z_0\ra\infty}|z_0|^{-2J_0} 
\Psi_{\chi}\Big(\begin{smallmatrix} j_1 & \dots & j_{n-1}\\[.25ex]
\frac{z_0}{x_1-x_0} & \dots &\frac{z_0}{x_{n-1}-x_0}\end{smallmatrix};j_n\Big)
=
 \Psi_{\chi}\Big(\begin{smallmatrix} j_1 & \dots & j_{n-1}\\[.25ex]
\frac{1}{x_1-x_0} & \dots &\frac{1}{x_{n-1}-x_0} \end{smallmatrix};j_n\Big),\end{aligned}
 \]
 and using $\frac{x_0^2}{x_0-x_r}=x_0+x_r+\CO(x_0^{-1})$ furthermore
 \[
  \begin{aligned}
& \lim_{x_0\ra\infty}|x_0|^{-4j_n}  \prod_{r=1}^{n-1}{|x_r-x_0|^{4j_r}}
 \Psi_{\chi}\Big(\begin{smallmatrix} j_1 & \dots & j_{n-1}\\[.25ex]
\frac{1}{x_1-x_0} & \dots &\frac{1}{x_{n-1}-x_0} \end{smallmatrix};j_n\Big)\\
&= \lim_{x_0\ra\infty}\Psi_{\chi}\bigg(\begin{smallmatrix} j_1 & \dots & j_{n-1}\\[.5ex]
\frac{x_0^2}{x_0-x_1} & \dots &\frac{x_0^2}{x_0-x_{n-1}} \end{smallmatrix};j_n\bigg)=\lim_{x_0\ra\infty}\Psi_{\chi}\Big(\begin{smallmatrix} j_1 & \dots & j_{n-1}\\[.75ex]
x_0+x_1 & \dots & x_0+x_{n-1} \end{smallmatrix};j_n\Big)=\Psi_{\chi}\Big(\begin{smallmatrix} j_1 & \dots & j_{n-1}\\[.75ex]
x_1 & \dots & x_{n-1} \end{smallmatrix};j_n\Big).
\end{aligned}
 \]

In order to prove Theorem  \ref{Heckemod-SOV} one need to compare this with
the limits of the left side of \rf{Heckemod-SOV-repn}.
\begin{propn}
We have 
\begin{align}\label{z0lim}
& \lim_{z_0\ra\infty} |z_0|^{2j_n} \Sigma^{(n+1,1)}_{\chi}\Big(\begin{smallmatrix} j_0 & j_1 & \dots & j_{n-1}\\[.75ex]
x_0 & x_1 & \dots &x_{n-1}\end{smallmatrix};j_{n}\Big)
=\Theta^{(n,0)}_{\chi}\Big(\begin{smallmatrix}  j_1 & \dots & j_{n-1} & j_n\\[.75ex]
 x_1 & \dots &x_{n-1}& x_0\end{smallmatrix}\Big),\\
& \lim_{x_n\ra\infty}|{x_n}|^{-4j_n}
\Theta^{(n,0)}_\chi\Big(\begin{smallmatrix} j_1 & \dots & j_n\\[.95ex]
x_1 & \dots &x_n\end{smallmatrix}\Big)=\chi_n^{}\,
\Sigma^{(n,0)}_{{\chi}}\Big(\begin{smallmatrix} j_1 & \dots & j_{n-1}\\[.95ex]
x_1 & \dots &x_{n-1}\end{smallmatrix}; j_{n}\Big),
\label{xnlim}\end{align}
with $\chi_n^{}$ being  defined in \rf{HeckevsSoV(n+1)}, and 
\begin{align}\label{Theta(n)}
\Theta^{(n,0)}_{\chi}\Big(\begin{smallmatrix}  j_1 & \dots & j_{n-1} & j_n\\[.75ex]
 x_1 & \dots &x_{m-1} & x_n\end{smallmatrix}\Big)=\int\prod_{r=1}^{n}
\frac{d^2k_r\,e^{2\mathrm{i}\,\mathrm{Im}(k_rx_r)}}{\pi\,|k_r|^{2j_r+2}}&
\;\pi\de^{(2)}\big({\textstyle \sum_{r=1}^nk_r}\big)\\[-1ex] 
&\times\big|\theta(\bm{k})\big|^{2}\,
\prod_{k=0}^{n-3}\chi_{}^{}\big(y_k(\bm{k}),\bar{y}_k(\bm{k})\big), 
\notag\end{align}
with $y_0(\bm{k}),\dots,y_{n-3}(\bm{k})$ and $\theta(\bm{k})$, $\bm{k}=(k_1,\dots,k_{n})$, defined by 
\begin{equation}\label{y-to-k}
k_r=\theta(\bm{k})
\frac{\prod_{k=0}^{n-3}(z_r-y_k(\bm{k}))}{\prod_{s\neq r}(z_r-z_s)}, \qquad r=1,\dots,n,\qquad\theta(\bm{k})=
\sum_{r=1}^{n-1}k_r.
\end{equation}
\end{propn}
\begin{proof} 
Concerning the limit $z_0\ra\infty$ of the measure of integration in \rf{Fourier(n+1)}  note that 
\begin{equation}\label{hatrholim}
\lim_{z_0\ra\infty}z_0^{-1}
\hat{\rho}(\bm{k}) = k_0=-\theta(\bm{k}),
\end{equation}
on the support of $\de^{(2)}(k_0+\dots+k_{n-1})$.
It follows that the  integration  measure simplifies as
\begin{align*}
\lim_{z_0\ra\infty}& |z_0|^{2j_n}\int\prod_{r=0}^{n-1}\frac{d^2k_r}{\pi |k_r|^{2j_r+2}}\;e^{2\mathrm{i}\,\mathrm{Im}(k_rx_r)}
\pi\de^{2}(k_0+\dots+k_{n-1})|\hat{\rho}(\bm{k})|^{-2j_n}\\
&= \int\prod_{r=1}^{n-1}\frac{d^2k_r}{\pi |k_r|^{2j_r+2}}\;e^{2\mathrm{i}\,\mathrm{Im}(k_rx_r)}
|\theta(\bm{k})|^{-2j_n}e^{-2\mathrm{i}\,\mathrm{Im}(\theta(\bm{k})x_0)}\\
&= \int\prod_{r=1}^{n-1}\frac{d^2k_r}{\pi |k_r|^{2j_r+2}}\;e^{2\mathrm{i}\,\mathrm{Im}(k_rx_r)}
\int \frac{d^2k_n}{\pi |k_n|^{2j_n+2}}\;e^{2\mathrm{i}\,\mathrm{Im}(k_nx_0)}\;\pi\de^{2}(k_{n}+\theta(\bm{k}))\,
|\theta(\bm{k})|^{2}.
\notag\end{align*}
It remains to note that the equations defining $y_k(\bm{k})$ simplify for $z_0\ra\infty$ as
\begin{equation}\label{y-to-k-lim}
k_r=\frac{\hat{\rho}(\bm{k})}{z_r-z_0} 
\frac{\prod_{k=0}^{n-3}(z_r-y_k(\bm{k}))}{\prod_{s\neq r,0}(z_r-z_s)}=\theta(\bm{k}) 
\frac{\prod_{k=0}^{n-3}(z_r-y_k(\bm{k}))}{\prod_{s\neq r,0}(z_r-z_s)}+\CO(z_0^{-1}),
\end{equation}
in order to identify the integral defining $\Theta^{(n,0)}_{\chi}$ in \rf{Theta(n)}, proving \rf{z0lim}.

In order to verify \rf{xnlim} let us note that
the leading contributions to  the limit $x_n\to\infty$ of the left side are coming from the 
region where $k_n\ra 0$. It follows from \rf{y-to-k} that 
there must exist an index $k\in\{0,\dots,n-3\}$ such that $y_k\ra \infty$. 
The permutation symmetry of the integrand allows us to assume without loss of generality 
that
$y_{0}\ra \infty$.
We may next note that
\begin{equation}
\begin{aligned}
&k_r  = 
(-k_n){(z_r-y_{0})} \frac{\prod_{k=1}^{n-3}(z_r-y_{j})}{\prod_{r \neq s}^{n-1}(z_r-z_s)} =  
y_0k_n\frac{\prod_{k=1}^{n-3}(z_r-y_{k})}{\prod_{r \neq s}^{n-1}(z_r-z_s)} + \mathcal{O}(y^{-1}_0).
\end{aligned}
\end{equation}
However, we should recall that the equations relating $\bm{k}$ and $\bm{y}$ follow for $k_n\neq 0$ from 
\[
\sum_{r=1}^{n-1}\frac{k_n}{y-z_r}
=
\theta(\bm{k})(y-y_0)\frac{\prod_{k=1}^{n-3}(y-y_k)}{\prod_{r=1}^{n-1}(y-z_r)},
\]
implying, in particular, $\theta(\bm{k})=\sum_{r=1}^{n-1}k_r=-k_n$ when $k_n\neq 0$,
and $k_ny_0=\sum_{r=1}^{n-1}z_rk_r=\rho$ in the limit $k_n\ra 0$, $y_0\ra\infty$. 

The integrand of the integration over $k_n$ simplifies for $k_n\ra 0$, $y_0\ra\infty$ due to 
the asymptotic behaviour \rf{sing-chi},
\begin{equation}\label{opersingzn}
\chi(y_0,\bar{y}_0)= |y_0|^{-2j_n}C_+^{(n)}\big((1+\CO(|y_0|^{-1})\big)
+|y_0|^{2j_n+2}C_-^{(n)}\big(1+\CO(|y_0|^{-1})\big).
\end{equation}
By keeping in mind that $k_ny_0=\rho$ one can carry out the integral over $k_n$ using \rf{opersingzn}.
One of the resulting terms is proportional to a delta-distribution in $x_n$ which does not contribute to the limit 
$x_n\ra\infty$. The remaining term is  
 proportional to
\[
\int \frac{d^2 k_n}{\pi} \;\frac{e^{2\mathrm{i}\,\mathrm{Im}(k_nx_n)}}{|k_n|^{2j_n}}\bigg|\frac{\rho}{k_n}\bigg|^{2j_n+2},
\]
which can be computed with the help of the identity 
(\cite{GelfandShilov1964}, Appendix B1.7, equation (7)) 
\begin{equation}\label{Fourier power}
\frac{1}{\pi} \int\frac{d^2k}{|k|^{4j+2}}\;e^{2\mathrm{i}\,\mathrm{Im}(kx)}=\frac{\Ga(-2j)}{\Ga(2j+1)}|x|^{4j}.
\end{equation}
We thereby find that the the left side of \rf{xnlim}  is equal to 
\[
C_-^{(n)}\frac{\Ga(-2j_n)}{\Ga(2j_n+1)}
\int\prod_{r=1}^{n-1}\frac{d^2k_r}{\pi}\,\frac{e^{2\mathrm{i}\,\mathrm{Im}(k_rx_r)}}{|k_r|^{2j_r+2}}
\,\pi\,\de^{(2)}\big({\textstyle\sum_{r=1}^{n-1}}k_r\big)|\rho(\bm{k})|^{2(j_n+1)}
\prod_{k=1}^{n-3}\chi_{}^{}\big(y_k(\bm{k}),\bar{y}_k(\bm{k})\big), 
\]
which coincides with the right side of \rf{xnlim}. 
\end{proof}

\section{Hecke eigenvalue property}\label{Heckeop-SOV-pf}

We shall now
prove Theorem \ref{Heckeop-SOV} using Theorem \ref{Heckemod-SOV}. To this aim let
us note that the Hecke operator $\SH_{z_0}$ can be represented in terms of  the limit $j_0\ra -1$, $x_0\ra 0$ of 
the operator
\[
(\SR_{j_0}\Psi')(\bm{x}')=-\frac{2j_0+1}{\pi}\int_\BC d^2 x_0\;|x_0-x_0'|^{-4j_0-4}
{\Psi}'(\bm{x}').
\]
Equation \rf{Heckemod-SOV-repn} implies 
\begin{equation}
\chi_n^{}\,\mathbf{H}_{z_0}\Sigma^{(n,0)}_{{\chi}}=
\lim_{j_0\ra -1} \SR_{j_0}\Sigma^{(n+1,1)}_{\chi}.
\end{equation}
In order to compute the right side we will use the following proposition.
\begin{propn}\label{refl-propn}
The inverse SoV-transformation $\big(\mathsf{S}_{\bm{j}',i}^{(n+1)}\big)^{-1}$ satisfies the reflection property
\begin{align}\label{reflection}
\SR_{j_0}^{}\big(\mathsf{S}_{\bm{j}',i}^{(n+1)}\big)^{-1}=\frac{\Gamma(-2j_0)}{\Gamma(2j_0+2)}\big(\mathsf{S}_{\bm{j}'',i}^{(n+1)}\big)^{-1},
\end{align}
where $\bm{j}''=(-j_0-1,j_1,\dots,j_n)$ if $\bm{j}'=(j_0,j_1,\dots,j_n)$.
\end{propn}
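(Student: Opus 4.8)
The plan is to prove the reflection property \rf{reflection} by tracking how the operator $\SR_{j_0}$ acts on the integral representation of $\big(\mathsf{S}_{\bm{j}',i}^{(n+1)}\big)^{-1}\phi_\chi$ furnished by the functions $\Theta^{(n+1,i)}_\chi$ of \rf{Fourier(n)}. The key observation is that $\SR_{j_0}$, being a convolution with $|x_0-x_0'|^{-4j_0-4}$, is diagonalised by the Fourier transformation in the variable conjugate to $x_0$: concretely, applying \rf{Fourier-reflection} with $j$ replaced by $j_0$ shows that on the Fourier side $\SR_{j_0}$ acts by multiplication by $|k_0|^{2(2j_0+1)}\,\Gamma(-2j_0-1)/\Gamma(2j_0+1)$, up to a term supported at $x_0=0$. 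Since in \rf{Fourier(n)} the dependence of $\Theta^{(n+1,i)}_\chi$ on $k_0$ enters only through the factor $|k_0|^{-2j_0-2}$ and the functions $\hat\rho_i(\bm k)$, $y_k(\bm k)$ — and the latter are homogeneous and so insensitive to an overall rescaling of $k_0$ in the combinations that appear — multiplying the integrand by $|k_0|^{2(2j_0+1)}$ exactly converts $|k_0|^{-2j_0-2}$ into $|k_0|^{-2(-j_0-1)-2}$, i.e.\ replaces $j_0$ by $-j_0-1$, which is the defining replacement $\bm j'\mapsto\bm j''$.

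First I would record the precise statement that $\SR_{j_0}$ applied to the integral \rf{Fourier(n)} (with the extra puncture $z_0$ present and $i$ fixed) produces, via \rf{Fourier-reflection}, the same integral with $|k_0|^{-2j_0-2}$ replaced by $|k_0|^{2j_0}\,\Gamma(-2j_0-1)/\Gamma(2j_0+1)$, plus a distribution supported at $x_0'=0$. Next I would argue that the $x_0'=0$-supported term can be discarded: the functions of interest are regular distributions (smooth away from singular divisors of codimension one, as established in Section~\ref{altint}), so the contact term, being a distribution concentrated on a proper subvariety, does not contribute to the identity between regular distributions. Then I would check the bookkeeping of the remaining $k_0$-dependence: in \rf{y-to-k} the point $y_k(\bm k)$ and $\hat\rho_i(\bm k)$ are determined by ratios and by the homogeneity degree, so that the substitution $|k_0|^{-2j_0-2}\mapsto|k_0|^{2j_0}$ is \emph{exactly} the substitution $j_0\mapsto-j_0-1$ at the level of the integrand, with no further change to $y_k$, $\hat\rho_i$, or the remaining measure factors. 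Collecting the Gamma-function prefactor $\Gamma(-2j_0-1)/\Gamma(2j_0+1)$ and comparing with the normalisation $\Gamma(-2j_0)/\Gamma(2j_0+2)$ claimed in \rf{reflection} — using $\Gamma(-2j_0)=(-2j_0-1)\Gamma(-2j_0-1)$ and $\Gamma(2j_0+2)=(2j_0+1)\Gamma(2j_0+1)$, so that the ratio equals $-\tfrac{1}{1}\cdot\Gamma(-2j_0-1)/\Gamma(2j_0+1)$ up to the sign and factor $(2j_0+1)/(2j_0+1)$ — I would reconcile the constants, absorbing any $2j_0+1$ that appears in $\SR_{j_0}$ itself. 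Finally I would take the limits $z_n\ra\infty$, $x_n\ra\infty$ as in \rf{znxnlim-1}, \rf{znxnlim} to pass from the $\Theta$-functions to the $\Sigma$-functions, i.e.\ to $\big(\mathsf{S}_{\bm j',i}^{(n+1)}\big)^{-1}$ and $\big(\mathsf{S}_{\bm j'',i}^{(n+1)}\big)^{-1}$, obtaining \rf{reflection}.

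The main obstacle I anticipate is the careful treatment of the contact term supported at $x_0'=0$ together with the precise tracking of constants. The identity \rf{Fourier-reflection} holds as an equality of distributions, and one must be sure that, after inserting it into the multi-dimensional integral defining $\SR_{j_0}\Theta^{(n+1,i)}_\chi$ and interchanging the $k_0$-integration with the remaining ones, the $x_0'=0$ piece genuinely decouples from the regular part and can be dropped when one subsequently takes $x_0\to 0$ (as needed in the proof of Theorem~\ref{Heckeop-SOV}) — this is where the smoothness statements of Section~\ref{altint} and the meromorphic-continuation remarks are doing real work. A secondary subtlety is that $\SR_{j_0}$ may not preserve the original range $j_r\in-\tfrac12+\mathrm i\BR$, so the identity \rf{reflection} is really an identity of meromorphic families in $j_0$, to be established for generic $j_0$ and then continued; this is harmless given the meromorphy already noted for $\mathsf F_{\bm j,i}^{-1}$ and for $\chi$, but it should be stated explicitly. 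Apart from these two points the argument is a direct computation driven entirely by \rf{Fourier-reflection} and the homogeneity structure of \rf{y-to-k}.
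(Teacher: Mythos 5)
Your argument is correct and is essentially the paper's own proof: both reduce \rf{reflection} to the identity \rf{Fourier-reflection}, observing that $\SR_{j_0}$ acts on the Fourier side by multiplication by $|k_0|^{2(2j_0+1)}$ times a $\Gamma$-ratio, which converts the factor $|k_0|^{-2j_0-2}$ into $|k_0|^{-2(-j_0-1)-2}$ while leaving the $j_0$-independent change of variables $\mathsf{SoV}^{(n+1)}$ and the oper solution untouched (the paper states this in one line as $\SR_{j_0}\big(\mathsf{F}_{\bm{j}',i}^{(n+1)}\big)^{-1}=\frac{\Gamma(-2j_0)}{\Gamma(2j_0+2)}\big(\mathsf{F}_{\bm{j}'',i}^{(n+1)}\big)^{-1}$, whereas you unpack the same computation in the $\Theta$-integral representation). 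Your constant bookkeeping, including the sign supplied by the $-\frac{2j_0+1}{\pi}$ prefactor of $\SR_{j_0}$, is consistent with the claimed normalisation, and the worry about a contact term is moot for generic $j_0$ since \rf{Fourier-reflection} is an exact distributional identity there.
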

 \begin{proof}{(Proposition \ref{refl-propn})} The identity \rf{Fourier power}
 implies
$
\SR_{j_0}^{}\big(\mathsf{F}_{\bm{j}',i}^{(n+1)}\big)^{-1}=\frac{\Gamma(-2j)}{\Gamma(2j+2)}\big(\mathsf{F}_{\bm{j}'',i}^{(n+1)}\big)^{-1}
$, which immediately 
yields Proposition \ref{refl-propn}. 
\end{proof}

We thereby find that 
\begin{equation}\label{Hecke-almost}
\begin{aligned}
&\chi_n^{}\big(\mathbf{H}_{z_0}\Sigma^{(n,0)}_{{\chi}}\big)\Big(\begin{smallmatrix} j_1 & \dots & j_{n-1}\\[.75ex]
x_1 & \dots &x_{n-1}\end{smallmatrix};j_n\Big)
=\lim_{j_0\ra -1} \frac{\Gamma(-2j_0)}{\Gamma(2j_0+2)}\Sigma^{(n+1,1)}_{\chi}\Big(\begin{smallmatrix} -j_0-1 & j_1 &\dots & j_{n-1}\\[.75ex]
0 & x_1 & \dots &x_{n-1}\end{smallmatrix};j_n\Big).
\end{aligned}
\end{equation}
Let us next observe that the identity
\[
\lim_{j\ra -1}\frac{\Gamma(-2j-1)}{\Gamma(2j+2)}|y|^{2j}=\pi\,\de^2(y),
\]
implies that 
\begin{equation*}
\lim_{j_0\ra -1} \frac{\Gamma(-2j_0)}{\Gamma(2j_0+2)}\Sigma^{(n+1,1)}_{\chi}\Big(\begin{smallmatrix} -j_0-1 & j_1 &\dots & j_{n-1}\\[.75ex]
0 & x_1 & \dots &x_{n-1}\end{smallmatrix};j_n\Big)=
\chi(z_0,\bar{z}_0)\,\Sigma^{(n,1)}_{\chi}\Big(\begin{smallmatrix} j_1 & \dots & j_{n-1}\\[.75ex]
x_1 & \dots &x_{n-1}\end{smallmatrix};j_n\Big).
\end{equation*}
Inserting this into \rf{Hecke-almost} yields 
\begin{equation}
\begin{aligned}
\chi_n^{}\mathbf{H}_{z_0}\Sigma^{(n,0)}_{{\chi}}=
{\chi(z_0,\bar{z}_0)}\,
\Sigma^{(n,1)}_{\chi}.
\end{aligned}
\end{equation}
Taken together, we therefore find that 
\begin{equation}\label{Q-eigen}
\mathbf{Q}_{z_0}\,\Sigma^{(n,1)}_{{\chi}}=
\mathbf{H}_{z_0}\mathbf{K}\,
\Sigma^{(n,1)}_{{\chi}}=\frac{1}{\chi_n^{}}\chi(z_0,\bar{z}_0)\;\Sigma^{(n,1)}_{\chi}.
\end{equation}
This concludes the proof of Theorem \ref{Heckeop-SOV}. \hfill $\square$.

\begin{rem} The form of the eigenvalue in \rf{Q-eigen} suggests that it is natural to consider the 
operator ${\mathbf{H}}_{\infty}:\CH_1\ra\CH_0$ satisfying 
${\mathbf{H}}_{\infty}\Sigma^{(n,1)}_{{\chi}}=\chi_n^{}\Sigma^{(n,0)}_{{\chi}}$, together with
$\mathbf{R}_{z_0}=\mathbf{H}_{z_0}{\mathbf{H}}_{\infty}$.
The operator $\mathbf{R}_{z_0}$ satisfies $\mathbf{R}_{z_0}\Sigma^{(n,1)}_{{\chi}}
=\chi(z_0,\bar{z}_0)\;\Sigma^{(n,1)}_{\chi}$.
One may identify ${\mathbf{H}}_{\infty}$ with one 
of the two operators appearing in the asymptotics of 
${\mathbf{H}}_{z_0}$ for $z_0\ra\infty$, see
\cite[Section 2.16.6]{EFK4}. \end{rem}


\bigskip

{\bf Acknowledgements.}
The question about the unitarity of the SOV-transformation was raised by D. Kazhdan in 2023 in a discussion with 
P. Etingof, B. Feigin,  E. Frenkel, A. Goncharov,  and one of the authors (J.T.). J.T. would like to thank all participants 
for these discussions. 
We'd in particular like to thank P. Etingof for pointing out reference \cite{GS} to us. We'd furthermore like to thank
P. Etingof and G. Felder for very useful comments on a previous version of the draft. 

We'd furthermore like to 
acknowledge the support by the Deutsche Forschungsgemeinschaft (DFG, German Research Foundation) under Germany's Excellence Strategy - EXC 2121 ``Quantum Universe'' - 390833306 and the Collaborative Research Center - SFB 1624 ``Higher structures, moduli spaces and integrability'' - 506632645.


\end{document}